\documentclass[12pt]{amsart}
\usepackage{amsmath,amsthm,enumerate,graphics,amsfonts,latexsym,amsopn,verbatim,amscd,amssymb}
\usepackage{color}
\usepackage{tikz}
\usepackage{amsmath,amssymb,graphicx,tikz,amsthm}
\usetikzlibrary{arrows,matrix}
\usepackage{url}

\theoremstyle{plain}
\newtheorem{thm}{Theorem}[section]

\newtheorem{cor}[thm]{Corollary}

\newtheorem{conj}[thm]{Conjecture}

\newtheorem{problem}{Theorem}[section] 
\newtheorem{pprob}[problem]{Problem}
\newenvironment{prob}{\pprob\rm}{\endpprob}

\theoremstyle{definition}

\DeclareMathOperator{\snm}{snm}
\DeclareMathOperator{\E}{E}
\DeclareMathOperator{\sn}{sn}

\DeclareMathOperator{\cnspec}{CN-spec}

\DeclareMathOperator{\CN}{CN}

\DeclareMathOperator{\Ecn}{E_{CN}}

\tikzset{every path/.style=thick,
	acteur/.style={
		circle,
		fill=red,
		thick,
		inner sep=1pt,
		minimum size=0.15cm
}}

\usepackage{ragged2e}

\begin{document}

\title[MSN-energy of commuting graphs of  finite rings]{Minimum second neighborhood degree energy of commuting graphs of finite rings}

\author[P. Tak, J. Dutta and R. K. Nath]{Payal Tak, Jutirekha Dutta and Rajat  Kanti Nath$^*$}

\address{P. Tak, \, Department of Mathematical Science, \, Tezpur  University, Napaam -784028, Sonitpur, Assam, India.}
\email{payaltak111@gmail.com}

\address{J. Dutta, Tezpur  University Campus, Napaam -784028, Sonitpur, Assam, India.}
\email{jutirekhadutta@yahoo.com}

\address{Rajat  Kanti Nath, \, Department of Mathematical Science, \, Tezpur  University, Napaam -784028, Sonitpur, Assam, India.}

\email{rajatkantinath@yahoo.com}

\begin{abstract}
In this paper, we compute minimum second neighborhood degree spectrum and energy of commuting graphs of certain finite non-commutative  rings. In particular, we consider non-commutative  rings of order $p^2, p^3, p^4, p^5, p^2q$ and $p^3q$, where $p$ and $q$ are primes. We shall also show that the commuting graphs of these rings are MSN-integral but not MSN-hyperintegral. Finally, employing the techniques used in this paper, we prove \cite[Conjecture 3]{NFDS-2021} and \cite[Conjecture 3.12]{fn24}. We conclude this paper with two open problems.
\end{abstract}

\thanks{ $^*$Corresponding author}
\subjclass[2020]{Primary: 05C25, 05C50; Secondary: 16P10.}
\keywords{Commuting graph, second neighborhood,   MSN-spectrum, MSN-energy, finite rings.}

\maketitle

\section{Introduction}
Let $\Gamma$ be a finite simple graph with vertex set $V(\Gamma) = \{v_1, v_2, \dots, v_n\}$. For any vertex $v_i \in V(\Gamma)$, we write $N(v_i)$ to denote the neighborhood of $v_i$ which is given by the set $\{v_j \in V(\Gamma) : v_j \sim v_i\}$ (where $v_j \sim v_i$ means $v_j$ is adjacent to $v_i$). The second neighborhood of $v_i$ is denoted by $N^2(v_i)$ and it is defined as $N^2(v_i) = \cup_{u \in N(v_i)}N(u) \setminus \{v_i\}$. Let $\delta_2(v_i)$ be the second neighborhood degree of $v_i$. Then $\delta_2(v_i) = \sum_{x \in N^2(v_i)}\deg x$, where $\deg x$ is the degree of $x$ in $\Gamma$. The minimum second neighborhood degree matrix of $\Gamma$, denoted by $\snm(\Gamma)$, is defined as 
\[
\snm(\Gamma)_{i, j} = \begin{cases}
\min\{\delta_2(v_i), \delta_2(v_j)\}, &\text{if $i \ne j$ and $v_i \sim v_j$}\\
0 , & \text{ otherwise},\\
\end{cases}
\]
where $\snm(\Gamma)_{i, j}$ is the $(i, j)$-th element of $\snm(\Gamma)$. The set of eigenvalues of $\snm(\Gamma)$ with multiplicities is called the minimum second neighborhood degree spectrum (for short MSN-spectrum) of $\Gamma$ and it is denoted by $\sigma_{\snm}(\Gamma)$. We write $\sigma_{\snm}(\Gamma) = \big{\{}[\sigma_1]^{\alpha_1}, [\sigma_2]^{\alpha_2},$ $ \dots, [\sigma_m]^{\alpha_m}\big{\}}$ to denote that $\sigma_1, \sigma_2, \dots, \sigma_m$ are the distinct eigenvalues of $\snm(\Gamma)$ having multiplicities $\alpha_1, \alpha_2, \dots, \alpha_m$ respectively. A graph $\Gamma$ is called MSN-integral if $\sigma_i$  is an integer for $i = 1, 2, \dots, m$.

The minimum second neighborhood degree energy (for short MSN-energy) of $\Gamma$, denoted by $\E_{\sn}(\Gamma)$, is defined as
\[
\E_{\sn}(\Gamma) = \sum_{i = 1}^{m}\alpha_i|\sigma_i|.
\] 
Note that $\snm(K_n) = (n-1)^2A(K_n)$, where $A(K_n)$ is the adjacency matrix of $K_n$. Therefore, $\sigma_{\snm}(K_n) = \{[-(n-1)^2]^{n - 1}, [(n-1)^3]^1\}$ and so $\E_{\sn}(K_n) = 2(n - 1)^3$. Further, if $\Gamma = l_1K_{m_1} \cup l_2K_{m_2} \cup \cdots \cup l_rK_{m_r}$ then 
\begin{align}\label{Eq-1}
	\sigma_{\snm}(\Gamma) = &\Big{\{} [-(m_1-1)^2]^{l_1(m_1 - 1)},  [(m_1-1)^3]^{l_1}, [-(m_2-1)^2]^{l_2(m_2 - 1)}, \nonumber\\
	&\qquad\qquad\qquad[(m_2-1)^3]^{l_2}, \dots, [-(m_r-1)^2]^{l_r(m_r - 1)}, [(m_r-1)^3]^{l_r}\Big{\}}
\end{align}
and 
\begin{equation}\label{Eq-2}
	\E_{\sn}(\Gamma) = 2\sum_{i = 1}^r l_i(m_i - 1)^3.
\end{equation}

A graph $\Gamma$ is called minimum second neighborhood  
 hyperenergetic (for short MSN-hyperenergetic) if $\E_{\sn}(\Gamma) > \E_{\sn}(K_n)$, where $K_n$ is the complete graph on $n$-vertices. Motivated by the notions of graph energy  and common neighborhood energy abbreviated as CN-energy  (pioneered by Gutman \cite{Gutman-1978,ASG}),  Manilal and Harikrishan \cite{MH24-CMI} introduced the notion of MSN-energy. A survey on various graph energies and their applications can be found in \cite{GF-2019}.  The concepts of MSN-hyperenergetic and  MSN-integral graphs are analogous to hyperenergetic \cite{Gutman1999, Walikar} and  integral \cite{hS74}   graphs. In \cite{HM-2024}, Harikrishan and Manilal  computed MSN-energy of commuting graphs of various finite non-abelian groups whose central quotient is isomorphic to a dihedral group, Suzuki group or group of order $p^2$ (where $p$ is a prime) including other classes of groups and also showed that commuting graphs of these groups are not MSN-hyperenergetic. It is worth mentioning that the commuting  graphs of these groups are also considered in \cite{DN1, DN2, SND-PJM-2022} (to show that they are integral and hyperenergetic) and \cite{FSN-2021, NFDS-2021} (to show that they are CN-integral and CN-hyperenergetic). The commuting graph of a finite non-abelian group $G$ is a graph whose vertex set is the set of non-central elements of $G$ and two distinct vertices are adjacent if they commute. This graph is studied widely (see \cite{DasN-2016,DBN-KJM-2020,DN3,DN-IJPAM-2021,GP-2013, IJ-2008,MP-2013,Nath-2018,N-2024,Parker-2013} for example) since the work of Brauer and Fowler \cite{BF-55}. In this paper, we consider commuting graphs of finite non-commutative rings and compute their MSN-spectrum and MSN-energy. As a consequence, we determine finite non-commutative rings such that their commuting graphs are MSN-integral and MSN-hyperenergetic.
 
Let $R$ be a finite non-commutative ring with center $Z(R)$. We write $\Gamma_R$ to denote the commuting graph of $R$ which is a graph with vertex set $R \setminus Z(R)$ and two distinct vertices are adjacent if they commute. The study of commuting graphs of  (semisimple) rings was initiated by Akbari et al. \cite{aghm04}. Abdollahi \cite{a08} and Mohammadian \cite{m10} considered commuting graphs of certain finite matrix rings in their study. It is noteworthy that Omidi and Vatandoost \cite{ov11} carried their study for any finite non-commutative rings. 
In Section 2, we consider finite rings whose central quotient (as additive group) is isomorphic to  ${\mathbb{Z}}_p \times {\mathbb{Z}}_p$ for any prime $p$.
In Section 3, we consider non-commutative rings of order $p^4$ and $p^5$ for any prime $p$. In Section 4, we consider non-commutative rings of order $p^2q$ and $p^3q$ for any primes $p$ and $q$. Finally, we conclude this paper with Section 5 where non-commutative CC-rings (a class of rings introduced by Erfanian et al. \cite{ekn15}) are considered.

For the graph $\Gamma$ with vertex set $V(\Gamma) = \{v_1, v_2, \dots, v_n\}$, the common neighborhood matrix of $\Gamma$ (denoted by $\CN(\Gamma)$) is given by
\[
\CN(\Gamma)_{i, j} = \begin{cases}
|C(v_i, v_j)|, & \text{ if } i \ne j\\
0, & \text{ if } i = j;
\end{cases}
\]
where $C(v_i, v_j)$ is the common neighborhood of $v_i$ and $v_j$ (see \cite{ASG}). Let $\cnspec(\Gamma)$ be the set of eigenvalues of $\CN(\Gamma)$ with multiplicities and $\Ecn(\Gamma)$ be the CN-energy of $\Gamma$. Then  $\Ecn(\Gamma) = \sum_{\sigma \in \cnspec(\Gamma)}|\sigma|$.  We have the following two conjectures regarding CN-energy of graphs.
\begin{conj}\label{Conj-1.1}\cite[Conjecture 3]{NFDS-2021} 
If $\Gamma = l_1K_{m_1} \cup l_2K_{m_2} \cup \cdots \cup l_rK_{m_r}$ then  $\Gamma$ is not CN-hyperenergetic.
\end{conj} 
\begin{conj}\label{Conj-1.2}\cite[Conjecture 3.12]{fn24}
If $R$ is a finite CC-ring then $\Gamma_R$ is not CN-hyperenergetic.
\end{conj}
We employ the techniques used in this paper and prove Conjecture \ref{Conj-1.1} and Conjecture \ref{Conj-1.2} in Section 5.

\section{Rings whose central quotient is isomorphic to ${\mathbb{Z}}_p \times {\mathbb{Z}}_p$}
This class of rings are considered in \cite{Dutta-Nath-CCR,fn24,N20} in order to compute various spectrum and energies of the commuting graphs. In \cite[Theorem 2.4]{Dutta-Nath-CCR}, it was shown that $\Gamma_R =  (p + 1) K_{(p - 1)m}$ if $\frac{R}{Z(R)}$ is isomorphic to ${\mathbb{Z}}_p \times {\mathbb{Z}}_p$, where $p$ is a prime and $|Z(R)| = m$. Therefore, using \eqref{Eq-1} and \eqref{Eq-2} we get the following result.
\begin{thm}\label{ZpxZp}
If $\frac{R}{Z(R)} \cong {\mathbb{Z}}_p \times {\mathbb{Z}}_p$ and $|Z(R)| = m$ then 
\[
\sigma_{\snm}(\Gamma_R) = \Big{\{}[-((p - 1)m-1)^2]^{(p + 1)((p - 1)m - 1)},  [((p - 1)m-1)^3]^{p + 1}  \Big{\}}
\] 
and 
$
\E_{\sn}(\Gamma_R) = 2(p + 1)((p - 1)m - 1)^3.
$
\end{thm}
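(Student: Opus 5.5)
The plan is to combine the structural description of $\Gamma_R$ with the general formulas \eqref{Eq-1} and \eqref{Eq-2}. By \cite[Theorem 2.4]{Dutta-Nath-CCR}, the hypothesis $\frac{R}{Z(R)} \cong {\mathbb{Z}}_p \times {\mathbb{Z}}_p$ with $|Z(R)| = m$ gives
\[
\Gamma_R = (p+1)K_{(p-1)m}.
\]
This is a disjoint union of complete graphs of one size. In the notation of \eqref{Eq-1} we therefore take $r = 1$, $l_1 = p+1$ and $m_1 = (p-1)m$.

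First I will check that \eqref{Eq-1} really applies, since the second neighborhood degrees are computed inside the whole graph. In a disjoint union of cliques, a vertex $v$ in a copy of $K_{n}$ has $N^2(v)$ equal to the other $n-1$ vertices of its clique, each of degree $n-1$. Hence $\delta_2(v) = (n-1)^2$. So $\snm$ is block diagonal, with each block equal to $(n-1)^2 A(K_n)$.

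The spectrum of $A(K_n)$ is $\{[-1]^{n-1}, [n-1]^1\}$. Each block therefore contributes $-(n-1)^2$ with multiplicity $n-1$ and $(n-1)^3$ with multiplicity $1$. With $n = (p-1)m$ and $p+1$ blocks, the multiplicities become $(p+1)((p-1)m-1)$ and $p+1$. This is exactly the stated spectrum.

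For the energy, apply \eqref{Eq-2}, or sum absolute values directly:
\[
(p+1)((p-1)m-1)\cdot((p-1)m-1)^2 + (p+1)((p-1)m-1)^3 = 2(p+1)((p-1)m-1)^3.
\]

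There is no real obstacle once \cite[Theorem 2.4]{Dutta-Nath-CCR} is available. The only point needing care is the degenerate case $(p-1)m = 1$, that is $p = 2$ and $m = 1$. Then every clique is $K_1$, the eigenvalue $0$ appears $p+1$ times, and the formula still holds: the first multiplicity is zero and the energy is $0$.
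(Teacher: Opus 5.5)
Your route is the paper's route: quote \cite[Theorem 2.4]{Dutta-Nath-CCR} to get $\Gamma_R=(p+1)K_{(p-1)m}$, then read the spectrum and energy off \eqref{Eq-1} and \eqref{Eq-2}. The extra step you add, checking \eqref{Eq-1} directly, is where the argument breaks, and it breaks in a case you did not flag.

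You claim that for $v$ in a copy of $K_n$ the set $N^2(v)$ consists of the other $n-1$ vertices. This needs $n\ge 3$. A vertex $w\neq v$ lies in $\bigcup_{u\in N(v)}N(u)$ only if some $u$ different from both $v$ and $w$ exists. For $n=2$ we have $N(v)=\{u\}$ and $N(u)=\{v\}$, so the paper's definition gives $N^2(v)=\{v\}\setminus\{v\}=\emptyset$. Hence $\delta_2(v)=0$, and $\snm(K_2)$ is the zero matrix, not $A(K_2)$.

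Consequently the statement itself fails when $(p-1)m=2$, that is, when $(p,m)\in\{(2,2),(3,1)\}$. Both cases actually occur:
\begin{itemize}
\item the ring of upper triangular $2\times 2$ matrices over $\mathbb{Z}_2$ has $m=2$ and $\Gamma_R=3K_2$;
\item the ring $\left\{\begin{pmatrix} a & b\\ 0 & 0\end{pmatrix} : a,b\in\mathbb{Z}_3\right\}$ has $m=1$ and $\Gamma_R=4K_2$.
\end{itemize}
In these cases $\sigma_{\snm}(\Gamma_R)=\{[0]^{2(p+1)}\}$ and $\E_{\sn}(\Gamma_R)=0$, whereas the theorem claims $\E_{\sn}(\Gamma_R)=2(p+1)$.

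The paper's proof has the same defect: \eqref{Eq-1}, and the remark $\snm(K_n)=(n-1)^2A(K_n)$, hold only for $n=1$ or $n\ge 3$. Your degenerate-case remark is therefore aimed at the wrong value. The case $(p-1)m=1$ is harmless. The case $(p-1)m=2$ must be excluded or treated separately before either the theorem or your proof is correct. For $(p-1)m\ne 2$, your block-diagonal verification and the eigenvalue and energy computation are fine.
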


We know that finite $n$-centralizer rings (for certain values of $n$) were characterized in terms of their central quotients (see \cite{dn16,dbn22,dbn23} for details). The notion of   $n$-centralizer ring is analogous to the notion of $n$-centralizer group which was introduced by Belcastro and Sherman \cite{BF-55} in 1994. In the following corollary, we compute MSN-spectrum and MSN-energy of commuting graphs of certain finite $n$-centralizer rings.
\begin{cor}\label{various-4-cent}
Let  $R$ be a finite $n$-centralizer ring and $|Z(R)| = m$.
\begin{enumerate}
\item If \, $n = 4$ \, then \, $
\sigma_{\snm}(\Gamma_R) \,\, = \,\, \Big{\{}[-(m-1)^2]^{3m - 3}, \quad  [(m-1)^3]^{3}  \Big{\}}
$ \,
and 
$
\E_{\sn}(\Gamma_R) = 6(m - 1)^3.
$

\item If \, $n = 5$ \, then \, $
\sigma_{\snm}(\Gamma_R) \, = \, \Big{\{}[-(2m-1)^2]^{8m - 4},  \quad 
[(2m-1)^3]^{4}  \Big{\}}
$ \,
and 
$
\E_{\sn}(\Gamma_R) = 8(2m - 1)^3.
$

\item If \, $n = 7$ \, then \, $\sigma_{\snm}(\Gamma_R) \, = \, \Big{\{}[-(4m-1)^2]^{24m - 6}, \quad [(4m-1)^3]^{6}  \Big{\}}$ 
and  
$
\E_{\sn}(\Gamma_R) = 12(4m - 1)^3.
$		
		
\item If $n = p + 2$ and $|R| = p^k$ for  $k \in \mathbb{N}$ then
\[
\sigma_{\snm}(\Gamma_R) = \Big{\{}[-((p - 1)m-1)^2]^{(p + 1)((p - 1)m - 1)},  [((p - 1)m-1)^3]^{p + 1}  \Big{\}}
\] 
and 
$
\E_{\sn}(\Gamma_R) = 2(p + 1)((p - 1)m - 1)^3.
$
\end{enumerate}
\end{cor}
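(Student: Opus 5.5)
The plan is to reduce each part of the corollary to Theorem \ref{ZpxZp}. For this we need the known characterizations of finite $n$-centralizer rings in terms of their central quotients, from \cite{dn16,dbn22,dbn23}:
\begin{itemize}
\item[] a finite ring $R$ is $4$-centralizer if and only if $\frac{R}{Z(R)} \cong \mathbb{Z}_2 \times \mathbb{Z}_2$;
\item[] a finite $5$-centralizer ring satisfies $\frac{R}{Z(R)} \cong \mathbb{Z}_3 \times \mathbb{Z}_3$;
\item[] a finite $7$-centralizer ring satisfies $\frac{R}{Z(R)} \cong \mathbb{Z}_5 \times \mathbb{Z}_5$;
\item[] a $(p+2)$-centralizer ring of order $p^k$ satisfies $\frac{R}{Z(R)} \cong \mathbb{Z}_p \times \mathbb{Z}_p$.
\end{itemize}
In each case the quotient is taken as an additive group. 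I will quote these facts rather than reprove them; they are the only non-computational input.

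Given these identifications, I will substitute $p = 2, 3, 5$, and a general $p$, into Theorem \ref{ZpxZp}. Underlying this is the fact that $\Gamma_R = (p+1)K_{(p-1)m}$, which comes from \cite[Theorem 2.4]{Dutta-Nath-CCR}, together with \eqref{Eq-1} and \eqref{Eq-2}. The routine checks are as follows.

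For $p=2$, $\Gamma_R = 3K_m$, so the eigenvalue $-(m-1)^2$ has multiplicity $3(m-1) = 3m-3$ and $(m-1)^3$ has multiplicity $3$. The energy is $2\cdot 3(m-1)^3 = 6(m-1)^3$.

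For $p=3$, $\Gamma_R = 4K_{2m}$, giving multiplicities $4(2m-1) = 8m-4$ and $4$. The energy is $8(2m-1)^3$.

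For $p=5$, $\Gamma_R = 6K_{4m}$, giving multiplicities $6(4m-1) = 24m-6$ and $6$. The energy is $12(4m-1)^3$.

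Part (4) is exactly Theorem \ref{ZpxZp} once the central quotient has been identified.

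The main obstacle is making sure the cited characterizations give precisely the isomorphism $\frac{R}{Z(R)} \cong \mathbb{Z}_p \times \mathbb{Z}_p$ in every case. This matters most for $n=5$ and $n=7$, where the statements in the literature may only be one-directional or may be phrased for rings satisfying extra hypotheses. If a characterization gives only a necessary condition, that direction is all we need, since we start from an $n$-centralizer ring $R$. I would check the exact statements in \cite{dbn22,dbn23} and otherwise just cite them.
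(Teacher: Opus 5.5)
Your proposal is correct and is essentially the paper's proof. The paper also invokes the characterizations of $4$-, $5$-, $7$- and $(p+2)$-centralizer rings from \cite{dbn22,dbn23}, using only the necessary direction $\frac{R}{Z(R)} \cong \mathbb{Z}_p \times \mathbb{Z}_p$, and then specializes Theorem \ref{ZpxZp} to $p = 2, 3, 5$ and to general $p$, with the same multiplicity and energy computations you carried out.
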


\begin{proof}
(a)  It was shown in \cite[Theorem 3.1]{dbn22} that  \,  $\frac{R}{Z(R)} \cong {\mathbb{Z}}_2 \times {\mathbb{Z}}_2$ \, when $R$ is a $4$-centralizer ring. Therefore, putting $p = 2$ in Theorem \ref{ZpxZp}, we get
\[
\sigma_{\snm}(\Gamma_R) = \Big{\{}[-(m-1)^2]^{3(m - 1)},  [(m-1)^3]^{3}  \Big{\}}
\] 
and 
$
\E_{\sn}(\Gamma_R) = 2\times 3(m - 1)^3.
$

(b)  It was shown in \cite[Theorem 4.1]{dbn22} that $\frac{R}{Z(R)} \cong {\mathbb{Z}}_3 \times {\mathbb{Z}}_3$ when  $R$ is a $5$-centralizer ring. Therefore, putting $p = 3$ in Theorem \ref{ZpxZp}, we get
\[
\sigma_{\snm}(\Gamma_R) = \Big{\{}[-(2m-1)^2]^{4(2m - 1)},  
[(2m-1)^3]^{4}  \Big{\}}
\] 
and 
$
\E_{\sn}(\Gamma_R) = 2\times 4(2m - 1)^3.
$

(c) It was shown in \cite[Theorem 3.5]{dbn23} that $\frac{R}{Z(R)} \cong {\mathbb{Z}}_5 \times {\mathbb{Z}}_5$ when  $R$ is a $7$-centralizer ring. Therefore, putting $p = 5$ in Theorem \ref{ZpxZp}, we get
\begin{center}
	$\sigma_{\snm}(\Gamma_R) = \Big{\{}[-(4m-1)^2]^{6(4m - 1)},  [(4m-1)^3]^{6}  \Big{\}}$
\end{center} 
and 
$
\E_{\sn}(\Gamma_R) = 2\times 6(4m - 1)^3.
$		

(d) It was shown in \cite[Theorem 2.6]{dbn22} that  $\frac{R}{Z(R)} \cong {\mathbb{Z}}_p \times {\mathbb{Z}}_p$ when  $|R| = p^k$ and $R$ is a $(p + 2)$-centralizer ring. 
Therefore, by Theorem \ref{ZpxZp} the result follows.
\end{proof}

\begin{cor}\label{com-deg-related}
Let  $\Pr(R)$  be the commuting probability of $R$ and $|Z(R)|$ $= m$.
\begin{enumerate}
\item If \, $\Pr(R)  =   \frac{5}{8}$ \,  then  $
\sigma_{\snm}(\Gamma_R) = \Big{\{}[-(m-1)^2]^{3(m - 1)}, \,\, [(m-1)^3]^{3}  \Big{\}}
$
and 
$
\E_{\sn}(\Gamma_R) = 6(m - 1)^3.
$

\item If $\Pr(R) = \frac{p^2 + p - 1}{p^3}$, where $p$ is the smallest prime divisor of   $|R|$, then \[
\sigma_{\snm}(\Gamma_R) = \Big{\{}[-((p - 1)m-1)^2]^{(p + 1)((p - 1)m - 1)},  [((p - 1)m-1)^3]^{p + 1}  \Big{\}}
\] 
and 
$
\E_{\sn}(\Gamma_R) = 2(p + 1)((p - 1)m - 1)^3.
$
\end{enumerate}
\end{cor}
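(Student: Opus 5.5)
The plan mirrors the proof of Corollary \ref{various-4-cent}: translate the hypothesis on the commuting probability $\Pr(R)$ into structural information about the central quotient $\frac{R}{Z(R)}$, and then apply Theorem \ref{ZpxZp}. The structural input comes from the known characterizations of finite rings with given commuting probability. These are due to MacHale for $5/8$ and to Buckley--MacHale (and subsequent work cited in \cite{dbn22,Dutta-Nath-CCR}) for the general bound. They are ring-theoretic analogues of the group results of Gustafson and Rusin.

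For part (a), I will invoke the result that a finite ring $R$ with $\Pr(R) = \frac{5}{8}$ satisfies $\frac{R}{Z(R)} \cong \mathbb{Z}_2 \times \mathbb{Z}_2$ as an additive group. Putting $p = 2$ in Theorem \ref{ZpxZp} then gives $(p-1)m - 1 = m-1$ and $p+1 = 3$. Hence $\sigma_{\snm}(\Gamma_R) = \{[-(m-1)^2]^{3(m-1)}, [(m-1)^3]^3\}$ and $\E_{\sn}(\Gamma_R) = 6(m-1)^3$, as claimed. Equivalently, one may note that such a ring is $4$-centralizer and apply Corollary \ref{various-4-cent}(a).

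For part (b), I will use the result that if $p$ is the smallest prime divisor of $|R|$, then $\Pr(R) \le \frac{p^2+p-1}{p^3}$, with equality if and only if $\frac{R}{Z(R)} \cong \mathbb{Z}_p \times \mathbb{Z}_p$. Granting this equality characterization, Theorem \ref{ZpxZp} yields the stated spectrum and energy verbatim.

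The only real obstacle is making sure the cited equality characterizations are stated for rings, and stated as the ``only if'' direction we need: equality forces $|R : Z(R)| = p^2$, and hence $R/Z(R) \cong \mathbb{Z}_p\times\mathbb{Z}_p$, because $R/Z(R)$ is never cyclic for a non-commutative $R$. If that direction is not available in a reference, I would derive it by counting. Write $\Pr(R) = \frac{1}{|R|^2}\sum_{r} |C(r)|$. Split the sum into central and non-central $r$, and use $|R : C(r)| \ge p$ for non-central $r$ together with $|R:Z(R)| \ge p^2$. Equality in the resulting bound forces $|R:Z(R)| = p^2$ exactly.
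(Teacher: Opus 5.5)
Your proposal is correct and follows the paper's proof. The paper cites MacHale's characterizations of rings with these commuting probabilities to get $\frac{R}{Z(R)} \cong \mathbb{Z}_p \times \mathbb{Z}_p$ (with $p = 2$ in part (a)), and then applies Theorem \ref{ZpxZp}. Your fallback counting argument, which bounds $\Pr(R) \le \frac{1}{p} + \frac{1}{|R:Z(R)|}\left(1 - \frac{1}{p}\right)$ and uses $|R:Z(R)| \ge p^2$, correctly supplies the needed equality direction if you prefer not to rely on the citation.
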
 
\begin{proof}
(a) It was shown in \cite[Theorems 1]{dmachale} that $\frac{R}{Z(R)} \cong {\mathbb{Z}}_2 \times {\mathbb{Z}}_2$ if $\Pr(R) = \frac{5}{8}$. Hence, the result follows from Theorem \ref{ZpxZp} by putting $p = 2$.

(b) It was shown in \cite[Theorems 3]{dmachale} that $\frac{R}{Z(R)} \cong {\mathbb{Z}}_p \times {\mathbb{Z}}_p$ if  $\Pr(R) = \frac{p^2 + p - 1}{p^3}$, where $p$ is   the smallest prime divisor of $|R|$. Hence, the result follows from Theorem \ref{ZpxZp}.
\end{proof}

\begin{cor}\label{various-ring-p}
Let $p$ be any prime and $R$ be a non-commutative ring. 
\begin{enumerate}
\item If  $|R| = p^2$ then \[
\sigma_{\snm}(\Gamma_R) = \Big{\{}[-((p - 1)m-1)^2]^{(p + 1)((p - 1)m - 1)},  [((p - 1)m-1)^3]^{p + 1}  \Big{\}}
\] 
and 
$
\E_{\sn}(\Gamma_R) = 2(p + 1)((p - 1)m - 1)^3.
$

\item If $R$ has unity and $|R| = p^3$  then \[
\sigma_{\snm}(\Gamma_R) = \Big{\{}[-((p - 1)m-1)^2]^{(p + 1)((p - 1)m - 1)},  [((p - 1)m-1)^3]^{p + 1}  \Big{\}}
\] 
and 
$
\E_{\sn}(\Gamma_R) = 2(p + 1)((p - 1)m - 1)^3.
$
\end{enumerate}
\end{cor}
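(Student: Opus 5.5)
The plan is to reduce both parts to Theorem \ref{ZpxZp}. That is, we show that in each case $\frac{R}{Z(R)}$ is isomorphic, as an additive group, to ${\mathbb{Z}}_p \times {\mathbb{Z}}_p$, where $m = |Z(R)|$. Once this is established, the spectrum and the energy are read off directly from Theorem \ref{ZpxZp}, which in turn rests on $\Gamma_R = (p+1)K_{(p-1)m}$ together with \eqref{Eq-1} and \eqref{Eq-2}.

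We begin with a standard fact that we use in both parts: for a non-commutative ring $R$, the additive group $\frac{R}{Z(R)}$ is not cyclic. Indeed, if $R = Z(R) + \langle x \rangle$ (additively), then any two elements $z_1 + a x$ and $z_2 + b x$ commute, since $x$ commutes with itself and with the centre. Hence $R$ would be commutative.

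For part (a), suppose $|R| = p^2$. Then $|Z(R)|$ divides $p^2$. It cannot equal $p^2$, since $R$ is non-commutative. If $|Z(R)| = p$, then $\frac{R}{Z(R)}$ has order $p$ and is cyclic, which contradicts the fact above. So $Z(R) = \{0\}$, $m = 1$, and $\frac{R}{Z(R)} \cong (R,+)$. This group has order $p^2$ and is non-cyclic, so it is ${\mathbb{Z}}_p \times {\mathbb{Z}}_p$. (Equivalently, $(R,+)$ cyclic would force $R$ commutative, since multiplication is then determined by $g\cdot g$ for a generator $g$.) Theorem \ref{ZpxZp} now applies with $m=1$.

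For part (b), suppose $R$ has unity and $|R| = p^3$. The centre contains the additive subgroup generated by $1$, which is nonzero, so $|Z(R)| \in \{p, p^2\}$. The case $|Z(R)| = p^2$ would make $\frac{R}{Z(R)}$ cyclic of order $p$, which is impossible. Hence $|Z(R)| = p$ and $\frac{R}{Z(R)}$ is a non-cyclic group of order $p^2$, namely ${\mathbb{Z}}_p \times {\mathbb{Z}}_p$. Theorem \ref{ZpxZp} then gives the stated spectrum and energy, with $m = |Z(R)| = p$.

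The only step with any real content is the non-cyclicity lemma. It is elementary, but it has to be stated for the additive quotient. With it in hand, both parts are just counting orders of the centre.
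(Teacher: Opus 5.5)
Your proposal is correct and follows the same route as the paper: determine $|Z(R)|$ (namely $m=1$ in (a) and $m=p$ in (b)), conclude $\frac{R}{Z(R)} \cong {\mathbb{Z}}_p \times {\mathbb{Z}}_p$, and apply Theorem \ref{ZpxZp}. The only difference is that you justify these center orders, using the fact that the additive quotient $\frac{R}{Z(R)}$ cannot be cyclic, whereas the paper simply asserts them.
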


\begin{proof}
(a) If $R$ is a non-commutative ring of order $p^2$, for any prime $p$, then  $|Z(R)| = 1$ and so $\frac{R}{Z(R)} \cong {\mathbb{Z}}_p \times {\mathbb{Z}}_p$. Hence, the result follows from Theorem \ref{ZpxZp}.

(b) If $R$ is a non-commutative ring with unity having order $p^3$, for any prime $p$, then  $|Z(R)| = p$ and so $\frac{R}{Z(R)} \cong {\mathbb{Z}}_p \times {\mathbb{Z}}_p$.  Hence, the result follows from Theorem \ref{ZpxZp}.
\end{proof}

\begin{thm}\label{various-cons-1}
If  $\frac{R}{Z(R)} \cong {\mathbb{Z}}_p \times {\mathbb{Z}}_p$ then   $\Gamma_R$ is MSN-integral but not MSN-hyperenergetic. 
\end{thm}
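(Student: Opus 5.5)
Both claims follow directly from Theorem \ref{ZpxZp}. Put $m = |Z(R)|$ and $N = (p-1)m$. By \cite[Theorem 2.4]{Dutta-Nath-CCR}, $\Gamma_R = (p+1)K_N$, so the graph has $n = (p+1)N = (p+1)(p-1)m = |R| - |Z(R)|$ vertices. Theorem \ref{ZpxZp} lists the MSN-eigenvalues as $-(N-1)^2$ and $(N-1)^3$. Since $p$ and $m$ are positive integers, both are integers, so $\Gamma_R$ is MSN-integral.

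For the second claim, I compare $\E_{\sn}(\Gamma_R) = 2(p+1)(N-1)^3$ with $\E_{\sn}(K_n) = 2(n-1)^3$, where $n = (p+1)N$. It suffices to prove
\[
(p+1)(N-1)^3 \le ((p+1)N - 1)^3 .
\]
Write $k = p+1 \ge 3$. If $N = 1$ (only possible when $p = 2$ and $m = 1$), the left side is $0$ and there is nothing to show. If $N \ge 2$, then $kN - 1 \ge k(N-1)$, because this reduces to $k \ge 1$. Cubing gives
\[
(kN-1)^3 \ge k^3 (N-1)^3 \ge k (N-1)^3 .
\]
Hence $\E_{\sn}(\Gamma_R) \le \E_{\sn}(K_n)$, so $\Gamma_R$ is not MSN-hyperenergetic.

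The only step needing care is this inequality, and it is elementary. The main point to get right is using the number of vertices of $\Gamma_R$, namely $(p+1)(p-1)m$, when forming $K_n$. The edge case $N = 1$ is harmless, since there $\Gamma_R$ is edgeless and its MSN-energy is $0$.
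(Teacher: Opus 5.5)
Your proof is correct and follows the paper's route exactly: MSN-integrality is read off from the spectrum in Theorem \ref{ZpxZp}, and non-hyperenergeticity comes from the chain $((p^2-1)m-1)^3 \ge (p+1)^3((p-1)m-1)^3 \ge (p+1)((p-1)m-1)^3$. Your non-strict version is slightly more careful than the paper's. When $p=2$ and $m=1$ (so $\Gamma_R = 3K_1$), the paper's second strict inequality is actually an equality, although its first inequality is strict, so its conclusion still holds.
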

\begin{proof}
Let $|Z(R)| = m$. Then $|V(\Gamma_R)| = |R| - |Z(R)| = (p^2 - 1)m$. Therefore,
\begin{align*}
\E_{\sn}(K_{|V(\Gamma_R)|}) = 2((p^2 - 1)m - 1)^3 &> 2 (p + 1)^3((p - 1)m - 1)^3\\
& > 2 (p + 1)((p - 1)m - 1)^3 = \E_{\sn}(\Gamma_R).
\end{align*}
Hence, $\Gamma_R$ is  not MSN-hyperenergetic. The fact that $\Gamma_R$ is MSN-integral follows from Theorem \ref{ZpxZp}.
\end{proof}
We conclude this section with the following corollary.
\begin{cor}
If $R$ is any ring considered in Corollary \ref{various-4-cent}--Corollary \ref{various-ring-p} then   $\Gamma_R$ is MSN-integral but not MSN-hyperenergetic.
\end{cor}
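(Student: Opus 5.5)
The corollary will follow by reduction to Theorem \ref{various-cons-1}: every ring in Corollary \ref{various-4-cent}--Corollary \ref{various-ring-p} has central quotient isomorphic to ${\mathbb{Z}}_p \times {\mathbb{Z}}_p$ for a suitable prime $p$. The plan is to check this hypothesis case by case, citing the same structural results the proofs of those corollaries use, and then invoke Theorem \ref{various-cons-1} once.

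The cases are as follows. For Corollary \ref{various-4-cent}, a $4$-, $5$- or $7$-centralizer ring has $\frac{R}{Z(R)}$ isomorphic to ${\mathbb{Z}}_2 \times {\mathbb{Z}}_2$, ${\mathbb{Z}}_3 \times {\mathbb{Z}}_3$ or ${\mathbb{Z}}_5 \times {\mathbb{Z}}_5$ respectively, by \cite[Theorem 3.1, Theorem 4.1]{dbn22} and \cite[Theorem 3.5]{dbn23}. A $(p+2)$-centralizer ring of order $p^k$ has $\frac{R}{Z(R)} \cong {\mathbb{Z}}_p \times {\mathbb{Z}}_p$ by \cite[Theorem 2.6]{dbn22}. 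For Corollary \ref{com-deg-related}, the results of \cite{dmachale} give ${\mathbb{Z}}_2\times{\mathbb{Z}}_2$ when $\Pr(R)=\frac58$, and ${\mathbb{Z}}_p\times{\mathbb{Z}}_p$ when $\Pr(R) = \frac{p^2+p-1}{p^3}$. For Corollary \ref{various-ring-p}, the proof there shows $|Z(R)| = 1$ or $p$ respectively. In either case $\frac{R}{Z(R)}$ has order $p^2$, so it is ${\mathbb{Z}}_{p^2}$ or ${\mathbb{Z}}_p \times {\mathbb{Z}}_p$ as an additive group. It cannot be cyclic, since $R$ is non-commutative, so it is ${\mathbb{Z}}_p\times{\mathbb{Z}}_p$.

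With the hypothesis verified in every case, Theorem \ref{various-cons-1} gives directly that $\Gamma_R$ is MSN-integral and not MSN-hyperenergetic. As a consistency check, the explicit spectra in those corollaries are integers, and the energy inequality in the proof of Theorem \ref{various-cons-1} uses only $(p^2-1)m - 1 > (p+1)((p-1)m-1)$ and $(p+1)^3 > p+1$.

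The only step needing any care is the order-$p^2$ central quotient case in Corollary \ref{various-ring-p}. There one must rule out a cyclic additive quotient. The standard argument is that if $\frac{R}{Z(R)}$ is cyclic, generated by $x + Z(R)$, then every element of $R$ has the form $kx + z$ with $z$ central, and any two such elements commute, contradicting non-commutativity. Everything else is a direct citation.
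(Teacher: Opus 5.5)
Your proposal is correct and matches the paper's own argument. The paper gives no separate proof: the corollary is stated right after Theorem \ref{various-cons-1} because the proofs of Corollary \ref{various-4-cent}--Corollary \ref{various-ring-p} already reduce every case to $\frac{R}{Z(R)} \cong {\mathbb{Z}}_p \times {\mathbb{Z}}_p$, so that theorem applies directly, and your argument that an additive quotient of order $p^2$ cannot be cyclic just fills in a step the paper leaves implicit in Corollary \ref{various-ring-p}.
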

\section{Rings of order $p^4$ and $p^5$}
The structures of commuting graphs of these two classes of rings are determined in \cite{vr16}. Later on, various spectra and energies of these graphs are computed in \cite{fn24, fns20}. Further, in \cite{fn24-BSPM} genus of commuting graphs of these classes of rings are computed and also determined whether they are planar or toroidal. In this section, we consider non-commutative rings of order $p^4$ and $p^5$, for any prime $p$, and show that the commuting graphs of these rings are MSN-integral but not MSN-hyperenergetic.  
\begin{thm}\label{order-p4}
Let $R$ be a non-commutative ring with unity and $|R| = p^4$.
\begin{enumerate}
\item Suppose that  $Z(R)$ has $p$ elements. Then  

\noindent $\sigma_{\snm}(\Gamma_R)  \, = \, \Big{\{} [-(p^2 - p-1)^2]^{(p^2 + p + 1)(p^2 - p - 1)}, \,\,\,  [(p^2 - p-1)^3]^{p^2 + p + 1}\Big{\}} 
$ \, 
and 
$\E_{\sn}(\Gamma_R) = 2(p^2 + p + 1)(p^2 - p - 1)^3$ 
		or 
$\sigma_{\snm}(\Gamma_R) = \Big{\{} [-(p^2 - p-1)^2]^{l_1(p^2 - p - 1)},$  $[(p^2 - p-1)^3]^{l_1}, \quad [-(p^3 - p-1)^2]^{l_2(p^3 - p - 1)}, \quad
[(p^3 - p-1)^3]^{l_2}\Big{\}}
$ \quad
and 

\noindent $\E_{\sn}(\Gamma_R) = 2 l_1(p^2 - p - 1)^3 + 2l_2(p^3 - p - 1)^3$, where $l_1 + l_2(p + 1) = p^2 + p + 1$.
		
\item Suppose that  $Z(R)$ has  $p^2$ elements. Then 
\begin{center}
	$\sigma_{\snm}(\Gamma_R) = \Big{\{} [-(p^3 - p^2-1)^2]^{(p + 1)(p^3 - p^2 - 1)},  [(p^3 - p^2-1)^3]^{p + 1}\Big{\}}
$
\end{center}
and 
$\E_{\sn}(\Gamma_R) = 2(p + 1)(p^3 - p^2 - 1)^3$.		
\end{enumerate}
\end{thm}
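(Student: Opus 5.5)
The plan is the same as for Theorem \ref{ZpxZp}: identify $\Gamma_R$ as a disjoint union of complete graphs, then read off the spectrum and energy from \eqref{Eq-1} and \eqref{Eq-2}. The structural input comes from \cite{vr16} (as used in \cite{fn24, fns20}). I would quote it in the following form for a non-commutative ring $R$ with unity and $|R| = p^4$:
\begin{itemize}
\item if $|Z(R)| = p$, then either $\Gamma_R = (p^2+p+1)K_{p^2-p}$ or $\Gamma_R = l_1K_{p^2-p} \cup l_2K_{p^3-p}$ with $l_1 + l_2(p+1) = p^2+p+1$;
\item if $|Z(R)| = p^2$, then $\Gamma_R = (p+1)K_{p^3-p^2}$.
\end{itemize}

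For part (b), no appeal to \cite{vr16} is needed. If $|Z(R)| = p^2$, then $R/Z(R)$ has order $p^2$ and is not cyclic, since $R$ is non-commutative. Hence $R/Z(R) \cong \mathbb{Z}_p \times \mathbb{Z}_p$, and Theorem \ref{ZpxZp} with $m = p^2$ gives the claim directly, because $(p-1)p^2 - 1 = p^3 - p^2 - 1$.

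For part (a) with $|Z(R)| = p$, I would sketch the reasoning behind the structure rather than only citing it, so the relation between $l_1$ and $l_2$ is transparent.
\begin{itemize}
\item For each $x \notin Z(R)$, the centralizer $C_R(x)$ is a subring containing $Z(R)$ and $x$, so $|C_R(x)| \in \{p^2, p^3\}$.
\item Since $R$ has unity, $C_R(x) \supseteq Z(R)[x]$. When $|C_R(x)| = p^2$ or $p^3$, I would argue that $C_R(x)$ is commutative. For $p^2$ this is automatic. For $p^3$, a centralizer of order $p^3$ in a ring of order $p^4$ containing $Z(R)$ with quotient of order $p^2$ is commutative by a counting argument: otherwise its own center would be too large.
\item Distinct maximal commutative centralizers therefore intersect exactly in $Z(R)$. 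So $\Gamma_R$ is a disjoint union of cliques of sizes $p^2 - p$ and $p^3 - p$.
\item Counting vertices gives $l_1(p^2-p) + l_2(p^3-p) = p^4 - p$. Dividing by $p^2 - p$ yields $l_1 + l_2(p+1) = p^2+p+1$. The case $l_2 = 0$ is the first alternative.
\end{itemize}

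Substituting $m_1 = p^2-p$ and $m_2 = p^3-p$ into \eqref{Eq-1} and \eqref{Eq-2} then gives the stated spectra and energies.

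The main obstacle is establishing the commutativity of centralizers, which is what forces the graph to be a disjoint union of cliques. I would rely on \cite{vr16} for this rather than reprove it. The rest is routine substitution.
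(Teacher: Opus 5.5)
Your proposal is correct. For part (a) it ends where the paper's proof ends: the paper quotes \cite[Theorem 2.5]{vr16} for both (a) and (b) and substitutes the resulting clique decompositions into \eqref{Eq-1} and \eqref{Eq-2}. You differ in how the decomposition is obtained.

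For (b) you avoid \cite{vr16} altogether. Since $R/Z(R)$ has order $p^2$ and cannot be cyclic, Theorem \ref{ZpxZp} with $m=p^2$ applies, so (b) becomes a literal special case of Section 2.

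For (a) your sketch is actually a complete elementary proof of the structure, once you write out the ``center too large'' step. For $x\notin Z(R)$, the subring $C_R(x)$ satisfies $Z(C_R(x))\supseteq Z(R)+\mathbb{Z}x$, which has order at least $p^2$. Hence $C_R(x)/Z(C_R(x))$ has order at most $p$, so it is cyclic and $C_R(x)$ is commutative. It follows that $C_R(w)=C_R(x)$ for every non-central $w\in C_R(x)$. This is what makes commuting transitive on $R\setminus Z(R)$, giving the partition into cliques of sizes $p^2-p$ and $p^3-p$. The vertex count then yields $l_1+l_2(p+1)=p^2+p+1$.

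Your route makes the result independent of the classification in \cite{vr16}. It also shows the unity hypothesis is not needed once $|Z(R)|$ is fixed; your remark about $Z(R)[x]$ plays no role. The cost is a few lines, whereas the paper's route is a one-line citation.

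One caveat applies equally to your argument and to the paper's, so it is not a defect of your route. For $p=2$ the components $K_{p^2-p}$ are copies of $K_2$. With $N^2$ as defined in the introduction, a vertex of a $K_2$-component has $N^2(v)=\emptyset$, so $\snm(K_2)=0$ rather than $A(K_2)$. Thus \eqref{Eq-1} does not literally apply to those components; for example, $R=M_2(\mathbb{F}_2)$ has $\Gamma_R=7K_2$.
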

\begin{proof}
(a) It was shown in  \cite[Theorem 2.5]{vr16} that \, $\Gamma_R = (p^2 + p + 1)K_{p(p - 1)}$ or $l_1K_{p(p - 1)}\cup l_2K_{p(p^2 - 1)}$, when $l_1 + l_2(p + 1) = p^2 + p + 1$. If $\Gamma_R = (p^2 + p + 1)K_{p(p - 1)}$  then, by  \eqref{Eq-1} and \eqref{Eq-2}, we have \,
$\sigma_{\snm}(\Gamma_R) \quad = \quad  \Big{\{} [-(p(p - 1)-1)^2]^{(p^2 + p + 1)(p(p - 1) - 1)},$ $  [(p(p - 1) - 1)^3]^{p^2 + p + 1}\Big{\}}
$
and 
$\E_{\sn}(\Gamma_R) = 2(p^2 + p + 1)(p(p - 1) - 1)^3$.

If $\Gamma_R = l_1K_{p(p - 1)}\cup l_2K_{p(p^2 - 1)}$  then, by  \eqref{Eq-1} and \eqref{Eq-2}, we have

\noindent $\sigma_{\snm}(\Gamma_R) = \Big{\{} [-(p(p - 1)-1)^2]^{l_1(p(p - 1) - 1)},  [(p(p - 1)-1)^3]^{l_1}, [-(p(p^2 - 1)-1)^2]^{l_2(p(p^2 - 1) - 1)},$ $ 
[(p(p^2 - 1)-1)^3]^{l_2}\Big{\}}
$
and 
$\E_{\sn}(\Gamma_R) = 2 l_1(p(p - 1) - 1)^3 + 2l_2(p(p^2 - 1) - 1)^3$.

(b) It was shown in \cite[Theorem 2.5]{vr16} that   $\Gamma_R = (p + 1)K_{p^2(p - 1)}$. Therefore, by \eqref{Eq-1} and  \eqref{Eq-2}, we have
\begin{center}
	$\sigma_{\snm}(\Gamma_R) = \Big{\{} [-(p^2(p - 1)-1)^2]^{(p + 1)(p^2(p - 1) - 1)}, \quad [(p^2(p - 1)-1)^3]^{p + 1}\Big{\}}
$
\end{center}
and 
$\E_{\sn}(\Gamma_R) = 2(p + 1)(p^2(p - 1)- 1)^3$.
\end{proof}

\begin{thm}
	If $R$ is a non-commutative ring with unity and $|R| = p^4$  then   $\Gamma_R$ is MSN-integral but not MSN-hyperenergetic.
\end{thm}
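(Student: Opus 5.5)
The plan is to apply Theorem \ref{order-p4} and compare the resulting energies with $\E_{\sn}(K_n)$ for $n = |V(\Gamma_R)| = p^4 - |Z(R)|$. MSN-integrality is immediate. In every case listed in Theorem \ref{order-p4}, each eigenvalue has the form $-(a)^2$ or $(a)^3$ with $a \in \{p^2-p-1,\ p^3-p-1,\ p^3-p^2-1\}$, and these are integers.

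For non-hyperenergeticity I will not check the cases separately. Instead I will prove one general inequality for disjoint unions of complete graphs and then specialise. Let $\Gamma = l_1K_{m_1}\cup\cdots\cup l_rK_{m_r}$ have $n$ vertices and at least two components, and write $a_i = m_i - 1 \ge 0$. Counting vertices gives
\[
\sum_i l_i a_i = n - \sum_i l_i \le n - 2 < n-1 .
\]
For nonnegative reals, a sum of cubes is at most the cube of the sum, so $\sum_i l_i a_i^3 \le \bigl(\sum_i l_i a_i\bigr)^3$. Combining these with \eqref{Eq-2} gives
\[
\E_{\sn}(\Gamma) = 2\sum_i l_i a_i^3 \le 2\Bigl(\sum_i l_i a_i\Bigr)^3 < 2(n-1)^3 = \E_{\sn}(K_n).
\]
Hence such a graph is never MSN-hyperenergetic.

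It remains to check that $\Gamma_R$ always has at least two components in each case of Theorem \ref{order-p4}. This is the only point needing care, and it is easy. The number of components is $p^2+p+1$ or $p+1$ in the uniform cases. In the mixed case it is $l_1 + l_2$ with $l_1 + l_2(p+1) = p^2+p+1$. If $l_1 + l_2 \le 1$, then either $l_2 = 0$, forcing $l_1 = p^2+p+1 > 1$, or $l_1 = 0$ and $l_2 = 1$, forcing $p+1 = p^2+p+1$. Both are impossible. More conceptually, $\Gamma_R$ is never complete for non-commutative $R$, since otherwise $R$ would be commutative.

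As a sanity check, the uniform case of Theorem \ref{order-p4}(a) can also be verified directly. The identity $(p^2-p-1)(p^2+p+1) = p^4-(p+1)^2 < p^4-p-1$ gives
\[
(p^2+p+1)(p^2-p-1)^3 \le \bigl((p^2+p+1)(p^2-p-1)\bigr)^3 < (p^4-p-1)^3 .
\]
I expect no real obstacle. The only subtle step is making sure the superadditivity bound is stated for the mixed case with unknown $l_1, l_2$, which the general inequality handles uniformly.
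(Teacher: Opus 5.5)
Your proof is correct, but it is organised differently from the paper's. The paper first notes that $|Z(R)|\in\{p,p^2\}$. It then treats the three graph shapes of Theorem~\ref{order-p4} separately, each with its own ad hoc estimate against $2(|V(\Gamma_R)|-1)^3$:
\begin{itemize}
\item the identity $(p^2+p+1)(p^2-p-1)=p^4-(p+1)^2$ for $(p^2+p+1)K_{p(p-1)}$;
\item the bound $(A+B-1)^3>A^3+B^3$ with $A=l_1(p^2-p)\ge 2$ and $B=l_2(p^3-p)\ge 6$ in the mixed case;
\item the identity $(p+1)(p^3-p^2-1)=(p+1)(p^3-p^2)-(p+1)$ when $|Z(R)|=p^2$.
\end{itemize}

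You instead prove once that any disjoint union of at least two cliques on $n$ vertices satisfies $\E_{\sn}<2(n-1)^3$. This uses $\sum_i l_i(m_i-1)=n-(\text{number of components})\le n-2$ together with $\sum_j x_j^3\le(\sum_j x_j)^3$. The only ring-theoretic input you then need is that $\Gamma_R$ is not complete, which is immediate from non-commutativity.

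This is essentially the general theorem the paper states in Section~5, but your formulation is the sound one. The paper's version there omits the ``at least two components'' hypothesis, without which the strict inequality fails for $\Gamma=K_n$, and its displayed chain of inequalities has typos. Your argument is short and removes all the $p$-dependent bookkeeping. The case-by-case route only buys explicit family-specific margins.

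The one thing you should add is the observation the paper opens with: Theorem~\ref{order-p4} really covers every ring in the statement.
\begin{itemize}
\item $|Z(R)|\ne 1$ because $1\in Z(R)$.
\item $|Z(R)|\ne p^3$ because $R/Z(R)$ cannot be additively cyclic for non-commutative $R$.
\item $|Z(R)|\ne p^4$ by non-commutativity.
\end{itemize}
You need this both for integrality and for knowing that $\Gamma_R$ is a disjoint union of cliques in the first place.
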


\begin{proof}
The fact that  $\Gamma_R$ is MSN-integral follows from the expressions of $\sigma_{\snm}(\Gamma_R)$ given in Theorem \ref{order-p4}.

Since $R$ is  non-commutative with unity having order $p^4$, the order of center of $R$ is not equal to $1$, $p^3$ and $p^4$. Therefore, we consider the following cases.

\noindent \textbf{Case 1.} $|Z(R)| = p$

In this case, $|V(\Gamma_R)| = p^4 - p$. Therefore, $\E_{\sn} (K_{|V(\Gamma_R)|}) = 2(p^4 - p - 1)^3$. By Theorem \ref{order-p4}(a), we have 
\begin{center}
	$\E_{\sn}(\Gamma_R) = 2(p^2 + p + 1)(p^2 - p - 1)^3$ or $2 l_1(p^2 - p - 1)^3 + 2l_2(p^3 - p - 1)^3$, 
\end{center}
where $l_1 + l_2(p + 1) = p^2 + p + 1$. Suppose that $\E_{\sn}(\Gamma_R) = 2(p^2 + p + 1)(p^2 - p - 1)^3$. Since
\begin{align*}
(p^2 + p + 1)(p^2 - p - 1)^3 &= (p^2 + p + 1)(p^2 - p - 1)(p^2 - p - 1)^2\\
&= (p^4 - (p + 1)^2)(p^2 - p - 1)^2\\
&< (p^4 - (p + 1))(p^4 - p - 1)^2\\
&= (p^4 - p - 1)^3,
\end{align*} 
we have $\E_{\sn}(\Gamma_R) < \E_{\sn}(K_{|V(\Gamma_R)|})$.

Suppose that $\E_{\sn}(\Gamma_R) = 2 l_1(p^2 - p - 1)^3 + 2l_2(p^3 - p - 1)^3$, where $l_1 + l_2(p + 1) = p^2 + p + 1$. Note that $|V(\Gamma_R)| = l_1(p^2 - p) + l_2(p^3 - p)$ and so 
\[
\E_{\sn}(K_{|V(\Gamma_R)|}) = 2(l_1(p^2 - p) + l_2(p^3 - p) - 1)^3.
\]
Since $l_1(p^2 - p) \geq 2$ and $l_2(p^3 - p) \geq 6$ we have 
\begin{align*}
(l_1(p^2 - p) + l_2(p^3 - p) - 1)^3 &> l_1^3(p^2 - p)^3 + l_2^3(p^3 - p)^3\\
&> l_1(p^2 - p)^3 + l_2(p^3 - p)^3\\
&> l_1(p^2 - p - 1)^3 + l_2(p^3 - p -1)^3.
\end{align*}
Therefore, $\E_{\sn}(K_{|V(\Gamma_R)|}) > \E_{\sn}(\Gamma_R)$. Hence, $\Gamma_R$ is  not MSN-hyperenergetic. 

\noindent \textbf{Case 2.} $|Z(R)| = p^2$

In this case, $|V(\Gamma_R)| = p^4 - p^2 = (p+1)(p^3 - p^2)$. Therefore, $\E_{\sn} (K_{|V(\Gamma_R)|}) = 2((p+1)(p^3 - p^2) - 1)^3$. By Theorem \ref{order-p4}(b), we have $\E_{\sn}(\Gamma_R) = 2(p + 1)(p^3 - p^2 - 1)^3$. Since
\begin{align*}
(p + 1)(p^3 - p^2 - 1)^3 &< (p + 1)^3(p^3 - p^2 - 1)^3\\
&= ((p+1)(p^3 - p^2 - 1))^3\\
&= ((p+1)(p^3 - p^2) - (p +1))^3\\
&< ((p+1)(p^3 - p^2) - 1)^3,
\end{align*}
it follows that $\E_{\sn}(\Gamma_R) < \E_{\sn} (K_{|V(\Gamma_R)|})$. Hence, $\Gamma_R$ is  not MSN-hyperenergetic.
\end{proof}

\begin{thm}\label{order-p5}
Let $R$ be a  non-commutative ring with unity, $|R| = p^5$  and $Z(R)$ is not a field.
\begin{enumerate}
\item Suppose that  $Z(R)$ has $p^2$ elements. Then 
		
\noindent $\sigma_{\snm}(\Gamma_R) \, = \, \Big{\{} [-(p^3 - p^2-1)^2]^{(p^2 + p + 1)(p^3 - p^2 - 1)}, \,\, [(p^3 - p^2-1)^3]^{p^2 + p + 1}\Big{\}}
$
and 
$\E_{\sn}(\Gamma_R) = 2(p^2 + p + 1)(p^3 - p^2 - 1)^3$		
 or 
$\sigma_{\snm}(\Gamma_R) = \Big{\{} [-(p^3 - p-1)^2]^{l_1(p^3 - p - 1)},$  $[(p^3 - p-1)^3]^{l_1}, \quad [-(p^4 - p^2-1)^2]^{l_2(p^4 - p^2 - 1)}, \quad 
[(p^4 - p^2-1)^3]^{l_2}\Big{\}}
$ \quad
and 

\noindent $\E_{\sn}(\Gamma_R) = 2l_1(p^3 - p - 1)^3 + 2l_2(p^4 - p^2 - 1)^3$ where $l_1 + l_2(p + 1) = p^2 + p + 1$.		

\item Suppose that  $Z(R)$ has $p^3$ elements. Then
\begin{center}
	$\sigma_{\snm}(\Gamma_R) = \Big{\{} [-(p^4 - p^3-1)^2]^{(p + 1)(p^4 - p^3 - 1)}, \quad  [(p^4 - p^3-1)^3]^{p + 1}\Big{\}}
$
\end{center}
and 
$\E_{\sn}(\Gamma_R) = 2(p + 1)(p^4 - p^3 - 1)^3$.	
\end{enumerate}
\end{thm}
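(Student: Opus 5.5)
The plan mirrors the proof of Theorem \ref{order-p4}. The structure of $\Gamma_R$ for a non-commutative ring $R$ with unity, $|R| = p^5$ and $Z(R)$ not a field is given in \cite{vr16}, in the same style as \cite[Theorem 2.5]{vr16}. I would read off the two relevant cases and then apply \eqref{Eq-1} and \eqref{Eq-2}.

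If $|Z(R)| = p^2$, the centralizer of a non-central element $x$ is a subring properly between $Z(R)$ and $R$, so it has order $p^3$ or $p^4$. The corresponding clique in $\Gamma_R$ is $C_R(x)\setminus Z(R)$, of size $p^3 - p^2$ or $p^4 - p^2$.

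\begin{itemize}
\item If all centralizers have order $p^3$, the $p^5 - p^2$ non-central elements split into $(p^5-p^2)/(p^3-p^2) = p^2+p+1$ cliques. Hence $\Gamma_R = (p^2+p+1)K_{p^3-p^2}$.
\item Otherwise, $\Gamma_R = l_1K_{p^3-p^2} \cup l_2K_{p^4-p^2}$, and counting vertices gives $l_1(p^3-p^2) + l_2(p^4-p^2) = p^5-p^2$. Dividing by $p^3-p^2 = p^2(p-1)$ gives $l_1 + l_2(p+1) = p^2+p+1$, matching the stated constraint.
\end{itemize}

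Substituting $l = p^2+p+1$, $m = p^3-p^2$ (and $m_2 = p^4-p^2$) into \eqref{Eq-1} and \eqref{Eq-2} then gives eigenvalues $-(m-1)^2$ with multiplicity $l(m-1)$ and $(m-1)^3$ with multiplicity $l$, and energy $2\sum l_i(m_i-1)^3$.

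One discrepancy needs checking. The statement writes the second alternative with $p^3 - p - 1$, whereas my derivation gives $p^3 - p^2 - 1$ for the smaller clique. I would follow \cite{vr16} for the exact clique sizes and then transcribe faithfully, so the displayed exponents have to be matched against that reference.

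If $|Z(R)| = p^3$, then $R/Z(R)$ has order $p^2$. I expect it to be $\mathbb{Z}_p\times\mathbb{Z}_p$: a cyclic quotient would force commutativity, and the additive group of $R/Z(R)$ is a $p$-group of order $p^2$. Then \cite[Theorem 2.4]{Dutta-Nath-CCR} gives $\Gamma_R = (p+1)K_{(p-1)p^3}$. Alternatively, this follows directly from Theorem \ref{ZpxZp} with $m = p^3$, which yields eigenvalue $-(p^4-p^3-1)^2$ with multiplicity $(p+1)(p^4-p^3-1)$, eigenvalue $(p^4-p^3-1)^3$ with multiplicity $p+1$, and energy $2(p+1)(p^4-p^3-1)^3$.

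The main obstacle is not the spectral computation, which is mechanical via \eqref{Eq-1}. It is pinning down the exact graph structures for part (a) from \cite{vr16}, namely which clique sizes can occur. The discrepancy noted above is the point I would resolve first.
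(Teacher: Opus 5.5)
Your approach is the paper's. It takes $\Gamma_R = (p^2+p+1)K_{p^2(p-1)}$ or $l_1K_{p^2(p-1)}\cup l_2K_{p^2(p^2-1)}$ when $|Z(R)|=p^2$, and $\Gamma_R=(p+1)K_{p^3(p-1)}$ when $|Z(R)|=p^3$, from \cite[Theorem 2.7]{vr16}, then applies \eqref{Eq-1} and \eqref{Eq-2}; your use of Theorem \ref{ZpxZp} in part (b) is an equivalent shortcut. The discrepancy you flagged is a typo in the statement: the paper's own proof, working from $K_{p^2(p-1)}$, gives $p^2(p-1)-1=p^3-p^2-1$ exactly as you derived, and the later non-hyperenergetic argument also uses $p^3-p^2-1$.
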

\begin{proof}
(a)  It was shown in  \cite[Theorem 2.7]{vr16} that   $\Gamma_R =
(p^2 + p + 1)K_{p^2(p - 1)}$ or $l_1K_{p^2(p - 1)}\cup l_2K_{p^2(p^2 - 1)}$ when $l_1 + l_2(p + 1) = p^2 + p + 1$, if  $|Z(R)| = p^2$. 

If $\Gamma_R = (p^2 + p + 1)K_{p^2(p - 1)}$ then   \eqref{Eq-1} and \eqref{Eq-2} give
\begin{center}
	$\sigma_{\snm}(\Gamma_R) = \Big{\{} [-(p^2(p - 1)-1)^2]^{(p^2 + p + 1)(p^2(p - 1) - 1)}, \,\,\, [(p^2(p - 1)-1)^3]^{p^2 + p + 1}\Big{\}}
$
\end{center}
and 
$\E_{\sn}(\Gamma_R) = 2(p^2 + p + 1)(p^2(p - 1) - 1)^3$.

If $\Gamma_R = l_1K_{p^2(p - 1)}\cup l_2K_{p^2(p^2 - 1)}$ then   \eqref{Eq-1} and \eqref{Eq-2} give

\noindent $\sigma_{\snm}(\Gamma_R) \quad = \quad \Big{\{} [-(p^2(p - 1)-1)^2]^{l_1(p^2(p - 1) - 1)}, \quad  [(p^2(p - 1)-1)^3]^{l_1},$

\qquad\qquad\qquad\qquad\qquad \qquad$ [-(p^2(p^2 - 1)-1)^2]^{l_2(p^2(p^2 - 1) - 1)},  \quad
[(p^2(p^2 - 1)-1)^3]^{l_2}\Big{\}}
$

\noindent and 
$\E_{\sn}(\Gamma_R) = 2l_1(p^2(p - 1) - 1)^3 + 2l_2(p^2(p^2 - 1) - 1)^3$.

(b) It was shown in \cite[Theorem 2.7]{vr16} that $\Gamma_R = (p + 1)K_{p^3(p - 1)}$. Therefore, by \eqref{Eq-1} and \eqref{Eq-2}, we have
\begin{center}
	$\sigma_{\snm}(\Gamma_R) = \Big{\{} [-(p^3(p - 1)-1)^2]^{(p + 1)(p^3(p - 1) - 1)}, \quad  [(p^3(p - 1)-1)^3]^{p + 1}\Big{\}}
$
\end{center}

\noindent and 
$\E_{\sn}(\Gamma_R) = 2(p + 1)(p^3(p - 1) - 1)^3$.
\end{proof}

We conclude this section with the following result.
\begin{thm}
If $R$ is a non-commutative ring with unity and $|R| = p^5$  then   $\Gamma_R$ is MSN-integral but not MSN-hyperenergetic.
\end{thm}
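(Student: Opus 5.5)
The plan is to follow the proof of the preceding theorem for rings of order $p^4$, now using Theorem \ref{order-p5}. MSN-integrality is immediate: every eigenvalue listed in Theorem \ref{order-p5} is $\pm$ a power of an integer of the form $k-1$, where $k$ is a clique size. For the energy, I first pin down the possible orders of $Z(R)$. Since $R$ is non-commutative, $R/Z(R)$ (as an additive group) is not cyclic, so $|Z(R)| \notin \{p^4,p^5\}$. Since $R$ has unity, $Z(R)\neq 0$ contains the prime subring, so $|Z(R)|\ne 1$. If $|Z(R)|=p$, then $Z(R)\cong\mathbb{Z}_p$ is a field. I will work under the standing hypothesis of Theorem \ref{order-p5} that $Z(R)$ is not a field, since that is the only setting where the structure of $\Gamma_R$ is available from \cite{vr16}. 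This leaves the two cases $|Z(R)|=p^2$ and $|Z(R)|=p^3$. In each case I compare $\E_{\sn}(\Gamma_R)$ with $\E_{\sn}(K_{|V(\Gamma_R)|})=2(|V(\Gamma_R)|-1)^3$, where $|V(\Gamma_R)|=p^5-|Z(R)|$.

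\textbf{Case $|Z(R)|=p^3$.} Here $|V(\Gamma_R)|=p^5-p^3=(p+1)(p^4-p^3)$. Exactly as in Case 2 for order $p^4$, I will use
\[
(p+1)(p^4-p^3-1)^3<\big((p+1)(p^4-p^3-1)\big)^3=\big((p+1)(p^4-p^3)-(p+1)\big)^3<\big((p+1)(p^4-p^3)-1\big)^3 .
\]

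\textbf{Case $|Z(R)|=p^2$, first form.} Here $\Gamma_R=(p^2+p+1)K_{p^2(p-1)}$ and $|V(\Gamma_R)|=(p^2+p+1)(p^3-p^2)$. Put $N=p^2+p+1$ and $c=p^3-p^2$. Then
\[
N(c-1)^3<N^3(c-1)^3=(Nc-N)^3<(Nc-1)^3,
\]
which gives the desired strict inequality.

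\textbf{Case $|Z(R)|=p^2$, second form.} Here $\Gamma_R=l_1K_{a}\cup l_2K_{b}$ with $a=p^2(p-1)$ and $b=p^2(p^2-1)$. (I read the exponent bases in Theorem \ref{order-p5}(a) as $a-1$ and $b-1$, i.e.\ as computed in its proof.) Now $|V(\Gamma_R)|=l_1a+l_2b$, and I need
\[
(l_1a+l_2b-1)^3>l_1(a-1)^3+l_2(b-1)^3 .
\]
Following the $p^4$ argument, when $l_1,l_2\ge 1$ both summands $l_1a\ge 4$ and $l_2b\ge 12$ are large. The cube expansion then gives $(x+y-1)^3>x^3+y^3$, because the cross terms $3xy(x+y)$ dominate the negative terms. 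Finally, $x^3=l_1^3a^3\ge l_1(a-1)^3$ and similarly for $y$. If one of $l_1,l_2$ is $0$, the claim reduces to the single-clique inequality $(lc-1)^3\ge l(c-1)^3$, with strictness from $l^3>l$ or $lc-1>l(c-1)$. Note that $l_2\ge1$ whenever $l_1<p^2+p+1$.

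The main obstacle is this mixed case. I have to verify carefully that $(x+y-1)^3>x^3+y^3$ for $x,y\ge 2$; it suffices to check that $3xy(x+y)$ exceeds $3(x+y)^2$, which holds for $x,y\ge 2$ (with the residual lower-order terms being positive). I also have to make sure the degenerate values of $l_1,l_2$ are covered. The other cases are one-line monotonicity estimates identical to those already used for order $p^4$.
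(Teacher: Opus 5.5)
Your proposal is correct and is essentially the paper's proof. It uses the same reduction to $|Z(R)|\in\{p^2,p^3\}$ under the hypothesis that $Z(R)$ is not a field, which the paper's proof also silently imports from Theorem~\ref{order-p5}. It uses the same estimates $N(c-1)^3<(Nc-N)^3<(Nc-1)^3$ and $(x+y-1)^3>x^3+y^3\ge l_1(a-1)^3+l_2(b-1)^3$. Your choice $a-1=p^3-p^2-1$ correctly fixes the typo $p^3-p-1$ in the statement of Theorem~\ref{order-p5}(a).

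Two small points on your extra care:
\begin{enumerate}
\item The single-clique strictness needs $l>1$. This holds because $l_1=0$ is impossible, since $p+1\nmid p^2+p+1$, and $l_2=0$ forces $l_1=p^2+p+1$.
\item The identity $(x+y-1)^3-x^3-y^3=3(x+y)(x-1)(y-1)-1$ settles the mixed case directly. Your intermediate claim $xy>x+y$ is only an equality at $x=y=2$, though it is strict in the relevant range $x\ge 4$, $y\ge 12$.
\end{enumerate}
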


\begin{proof}
The fact that  $\Gamma_R$ is MSN-integral follows from the expressions of $\sigma_{\snm}(\Gamma_R)$ given in Theorem \ref{order-p5}.

Since $R$ is  non-commutative with unity having order $p^5$ and $Z(R)$ is not a field, the order of center of $R$ is not equal to $1$, $p$, $p^4$ and $p^5$. Therefore, we consider the following cases.

\noindent \textbf{Case 1.} $|Z(R)| = p^2$

In this case, $|V(\Gamma_R)| = p^5 - p^2$. Therefore, $\E_{\sn} (K_{|V(\Gamma_R)|}) = 2(p^5 - p^2 - 1)^3$. By Theorem \ref{order-p5}(a), we have 
\begin{center}
$\E_{\sn}(\Gamma_R) = 2(p^2 + p + 1)(p^3 - p^2 - 1)^3$ or $ 2l_1(p^3 - p - 1)^3 + 2l_2(p^4 - p^2 - 1)^3$,
\end{center} where $l_1 + l_2(p + 1) = p^2 + p + 1$. Suppose that 
$\E_{\sn}(\Gamma_R) = 2(p^2 + p + 1)(p^3 - p^2 - 1)^3$. Note that $|V(\Gamma_R)| = (p^2 + p + 1)(p^3 - p^2)$ and so 
\[
\E_{\sn}(K_{|V(\Gamma_R)|}) = 2 ((p^2 + p + 1)(p^3 - p^2) - 1)^3.
\]
Since
\begin{align*}
(p^2 + p + 1)(p^3 - p^2 - 1)^3 &< (p^2 + p + 1)^3(p^3 - p^2 - 1)^3\\
&= ((p^2 + p + 1)(p^3 - p^2) - (p^2 + p + 1))^3\\
&< ((p^2 + p + 1)(p^3 - p^2) - 1)^3,
\end{align*}
we have $\E_{\sn}(\Gamma_R) < \E_{\sn}(K_{|V(\Gamma_R)|})$.

Suppose that $\E_{\sn}(\Gamma_R) = 2l_1(p^3 - p - 1)^3 + 2l_2(p^4 - p^2 - 1)^3$. Note that $|V(\Gamma_R)| = l_1(p^3 - p^2) + l_2(p^4 - p^2)$ and so
\[
\E_{\sn}(K_{|V(\Gamma_R)|}) = 2 (l_1(p^3 - p^2) + l_2(p^4 - p^2) - 1)^3.
\] 
Since $l_1(p^3 - p^2) \geq 4$ and $l_2(p^4 - p^2) \geq 12$ we have 
\begin{align*}
	(l_1(p^3 - p^2) + l_2(p^4 - p^2) - 1)^3 &> l_1^3(p^3 - p^2)^3 + l_2^3(p^4 - p^2)^3\\
	&> l_1(p^3 - p^2)^3 + l_2(p^4 - p^2)^3\\
	&> l_1(p^3 - p^2 - 1)^3 + l_2(p^4 - p^2 -1)^3.
\end{align*}
Therefore, $\E_{\sn}(K_{|V(\Gamma_R)|}) > \E_{\sn}(\Gamma_R)$. Hence, $\Gamma_R$ is  not MSN-hyperenergetic.

\noindent \textbf{Case 2.} $|Z(R)| = p^3$

In this case, $|V(\Gamma_R)| = p^5 - p^3 = (p+1)(p^4 - p^3)$. Therefore, $\E_{\sn} (K_{|V(\Gamma_R)|}) = 2((p+1)(p^4 - p^3) - 1)^3$. By Theorem \ref{order-p5}(b), we have $\E_{\sn}(\Gamma_R) = 2(p + 1)(p^4 - p^3 - 1)^3$. Since
\begin{align*}
	(p + 1)(p^4 - p^3 - 1)^3 &< (p + 1)^3(p^4 - p^3 - 1)^3\\
	&= ((p+1)(p^4 - p^3 - 1))^3\\
	&= ((p+1)(p^4 - p^3) - (p +1))^3\\
	&< ((p+1)(p^4 - p^3) - 1)^3,
\end{align*}
it follows that $\E_{\sn}(\Gamma_R) < \E_{\sn} (K_{|V(\Gamma_R)|})$. Hence, $\Gamma_R$ is  not MSN-hyperenergetic.
\end{proof}

\section{Rings of order $p^2q$ and $p^3q$}
In \cite{vrb14},  Vatandoost et al. obtained the structures of commuting graphs of these two classes of rings. Following them,  various spectra and energies of these graphs are computed in \cite{fn24, fns20}. The genus of commuting graphs of these classes of rings are also computed and determined whether they are planar or toroidal in \cite{fn24-BSPM}. In this section, we consider non-commutative rings of order $p^2q$ and $p^3q$, where $p$ and $q$ are primes, and show that the commuting graphs of these rings are MSN-integral but not MSN-hyperenergetic.  

\begin{thm}\label{order-p^2q}
Let $R$ be a non-commutative ring, $|R| = p^2q$ and $Z(R) = \{0\}$.   
\begin{enumerate}
\item Suppose that ``$t \in \{p, q, p^2, pq\}$ and $(t - 1)$  divides $(p^2q - 1)$". Then 

\noindent $\sigma_{\snm}(\Gamma_R) = \Big{\{} [-(t - 2)^2]^{\frac{(p^2q - 1)(t - 2)}{t - 1}},  [(t - 2)^3]^{\frac{p^2q - 1}{t - 1}}\Big{\}}
$
and 
$\E_{\sn}(\Gamma_R) = \frac{2(p^2q - 1)(t - 2)^3}{t - 1}$.

\item If $(p - 1)l_1 + (q - 1)l_2 + (p^2 - 1)l_3 + (pq - 1)l_4 = p^2q -1$ then 
\begin{align*}
\sigma_{\snm}(\Gamma_R) &= \Big{\{} [-(p -2)^2]^{l_1(p - 2)},  [(p - 2)^3]^{l_1}, [-(q - 2)^2]^{l_2(q - 2)}, [(q - 2)^3]^{l_2},\\
&\qquad[-(p^2 - 2)^2]^{l_3(p^2 - 2)},  [(p^2 - 2)^3]^{l_3}, [-(pq - 2)^2]^{l_4(pq - 2)}, [(pq - 2)^3]^{l_4}\Big{\}}
\end{align*}
and 
$
	\E_{\sn}(\Gamma_R) = 2l_1(p - 2)^3 + 2l_2(q - 2)^3 + 2l_3(p^2-2)^3+ 2l_4(pq -2)^3.
$	
	\end{enumerate}
\end{thm}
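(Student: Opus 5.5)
The proof will follow the pattern of Theorems \ref{order-p4} and \ref{order-p5}: take the structure of $\Gamma_R$ from the literature and then read off the spectrum and energy from \eqref{Eq-1} and \eqref{Eq-2}. The structural input is the result of Vatandoost et al.\ \cite{vrb14} on non-commutative rings of order $p^2q$ with $Z(R)=\{0\}$. I expect it to say that $\Gamma_R$ is a disjoint union of complete graphs. The point is that every centralizer $C_R(x)$ of a non-central $x$ is a proper subring, hence an additive subgroup of order $t \in \{p, q, p^2, pq\}$. Moreover, these centralizers intersect pairwise only in $Z(R)=\{0\}$.

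The two cases of the statement then correspond to two possible shapes of $\Gamma_R$.
\begin{itemize}
\item In case (a) all centralizers have the same order $t$. Each contributes a clique $K_{t-1}$ on $C_R(x)\setminus\{0\}$. These cliques partition the $p^2q-1$ nonzero elements, so $\Gamma_R = \frac{p^2q-1}{t-1}K_{t-1}$, which is where the divisibility hypothesis $(t-1)\mid(p^2q-1)$ comes from.
\item In case (b) the centralizers may have mixed orders. Then $\Gamma_R = l_1K_{p-1}\cup l_2K_{q-1}\cup l_3K_{p^2-1}\cup l_4K_{pq-1}$, and counting vertices gives exactly the stated relation $(p-1)l_1+(q-1)l_2+(p^2-1)l_3+(pq-1)l_4=p^2q-1$.
\end{itemize}

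With this structure in hand, the computation is mechanical. In case (a), apply \eqref{Eq-1} with $r=1$, $m_1=t-1$ and $l_1=\frac{p^2q-1}{t-1}$. This gives the eigenvalue $-(t-2)^2$ with multiplicity $\frac{(p^2q-1)(t-2)}{t-1}$ and the eigenvalue $(t-2)^3$ with multiplicity $\frac{p^2q-1}{t-1}$. Then \eqref{Eq-2} yields $\E_{\sn}(\Gamma_R)=\frac{2(p^2q-1)(t-2)^3}{t-1}$. In case (b), apply \eqref{Eq-1} and \eqref{Eq-2} with $r=4$ and $m_i\in\{p-1,q-1,p^2-1,pq-1\}$; each $m_i-1$ becomes $p-2$, $q-2$, $p^2-2$ or $pq-2$ respectively, giving the listed spectrum and energy.

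The main obstacle is making sure that the cited structural result really gives $\Gamma_R$ as a union of cliques, with centralizers meeting only at $0$, in the exact form used. If the reference only states the centralizer orders, I would supply the missing step myself. Take a non-central $x$. Then $C_R(x)$ is a proper subring containing $x$, so its order is $p$, $q$, $p^2$ or $pq$, and any non-central element of it has order-maximal centralizer equal to $C_R(x)$. From this one argues that commutativity is transitive on $R\setminus\{0\}$, which makes $\Gamma_R$ a disjoint union of cliques. A smaller point is the degenerate case $t=2$, i.e.\ $p=2$ or $q=2$ with $t\in\{p,q\}$. There the cliques are $K_1$, the eigenvalues are $0$, and the formulas still hold trivially.
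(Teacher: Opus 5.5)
Your proposal is correct and matches the paper's proof: the paper cites \cite[Theorem 2.9]{vrb14} for $\Gamma_R = \frac{p^2q-1}{t-1}K_{t-1}$ in case (a) and $\Gamma_R = l_1K_{p-1}\cup l_2K_{q-1}\cup l_3K_{p^2-1}\cup l_4K_{pq-1}$ in case (b), then reads off the spectrum and energy from \eqref{Eq-1} and \eqref{Eq-2} with $m_i - 1 \in \{t-2, p-2, q-2, p^2-2, pq-2\}$, exactly as you do. Your fallback sketch is looser than it should be: the phrase ``order-maximal centralizer'' does not by itself yield transitivity. The clean reason is that each $C_R(x)$ with $x \neq 0$ is commutative, since it either has cyclic additive group or has order $p^2$ with the nonzero element $x$ in its center. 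The paper never needs this step, because it relies on the cited structure theorem.
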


\begin{proof}
(a) It was shown in  \cite[Theorem 2.9]{vrb14})  that
$\Gamma_R = \frac{p^2q - 1}{t - 1}K_{t - 1}$. Therefore, by \eqref{Eq-1} and \eqref{Eq-2}, we have
\begin{center}
	$\sigma_{\snm}(\Gamma_R) = \Big{\{} [-(t - 1-1)^2]^{\frac{p^2q - 1}{t - 1} \times (t - 1 - 1)}, \quad  [(t - 1-1)^3]^{\frac{p^2q - 1}{t - 1}}\Big{\}}
$
\end{center}
and 
$\E_{\sn}(\Gamma_R) = 2\times \frac{p^2q - 1}{t - 1}\times(t - 1 - 1)^3$.

(b) It was shown in  \cite[Theorem 2.9]{vrb14})  that $\Gamma_R  = l_1K_{p - 1}\cup l_2K_{q - 1} \cup l_3K_{p^2 - 1} \cup l_4K_{pq - 1}$. Therefore, by \eqref{Eq-1} and \eqref{Eq-2}, we have
\begin{align*}
\sigma_{\snm}(\Gamma_R) = &\Big{\{} [-(p -1-1)^2]^{l_1(p - 1 - 1)}, \quad   [(p - 1-1)^3]^{l_1}, \quad [-(q - 1-1)^2]^{l_2(q - 1 - 1)},\\ 
&\qquad [(q - 1-1)^3]^{l_2}, \quad 	[-(p^2 - 1-1)^2]^{l_3(p^2 - 1 - 1)}, \quad [(p^2 - 1-1)^3]^{l_3}, \\
&\qquad \qquad\qquad\qquad\qquad\quad
	 [-(pq - 1-1)^2]^{l_4(pq - 1 - 1)}, \quad [(pq - 1-1)^3]^{l_4}\Big{\}}
\end{align*}
and 
$
	\E_{\sn}(\Gamma_R) = 2l_1(p - 1 - 1)^3 + 2l_2(q - 1 - 1)^3 + 2l_3(p^2-1 - 1)^3 + 2l_4(pq - 1 - 1)^3.
$
\end{proof}
\begin{thm}
	If $R$ is a non-commutative ring with unity and $|R| = p^2q$  then   $\Gamma_R$ is MSN-integral but not MSN-hyperenergetic.
\end{thm}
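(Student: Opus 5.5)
The statement as written, with unity, is actually vacuous, so I plan to note this first. I will then also give the argument the paper presumably intends, based on Theorem \ref{order-p^2q}. That theorem is stated under the hypothesis $Z(R)=\{0\}$, which is incompatible with $R$ having unity. So the first step is to check what rings the statement really covers. A finite ring decomposes as the direct product of its Sylow components, so $R \cong R_p \times R_q$ with $|R_p| = p^2$ and $|R_q| = q$. If $R$ has unity, then so do $R_p$ and $R_q$. Every ring with unity of order $q$ or $p^2$ is commutative, since the smallest non-commutative ring with unity has order $p^3$. Hence $R$ would be commutative, and no non-commutative ring with unity of order $p^2q$ exists. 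I will record this observation, so the theorem holds vacuously.

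To cover the case the paper intends (no unity, $Z(R)=\{0\}$), I will argue directly from Theorem \ref{order-p^2q}.

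\emph{MSN-integrality.} In both cases of Theorem \ref{order-p^2q} every eigenvalue has the form $-(m-1)^2$ or $(m-1)^3$ with $m \in \{p, q, p^2, pq, t\}$, hence is an integer.

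\emph{Non-hyperenergeticity.} I will prove a uniform inequality. If $\Gamma = l_1K_{m_1}\cup\cdots\cup l_rK_{m_r}$ with $n = \sum l_i m_i$ vertices and at least two components, then
\[
\textstyle\sum_i l_i(m_i-1)^3 < (n-1)^3 .
\]
The proof has three steps.
\begin{enumerate}
\item Set $a_i = l_i m_i - 1 \ge 0$. Since $a_i \ge l_i(m_i-1) \ge l_i^{1/3}(m_i-1)$, we get $l_i(m_i-1)^3 \le a_i^3$.
\item For nonnegative reals, $\sum a_i^3 \le (\sum a_i)^3$.
\item We have $\sum a_i = n - r \le n-1$, with strict inequality $n-r < n-1$ as soon as $r \ge 2$.
\end{enumerate}
By \eqref{Eq-2}, $\E_{\sn}(\Gamma_R) = 2\sum_i l_i(m_i-1)^3$ and $\E_{\sn}(K_n) = 2(n-1)^3$, so the inequality gives $\E_{\sn}(\Gamma_R) < \E_{\sn}(K_{|V(\Gamma_R)|})$.

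The only step needing care is strictness in the degenerate case of a single component. That would mean $\Gamma_R$ is complete, which is impossible for a non-commutative ring, since non-central elements that fail to commute exist. Alternatively, strictness can already be taken from step 1 whenever some $l_i \ge 2$ and $m_i \ge 2$.

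In case (a) of Theorem \ref{order-p^2q} the number of components is $\frac{p^2q-1}{t-1}$, and this exceeds $1$ because $t-1 < p^2q-1$. In case (b) the graph is a union of the four families of complete graphs, so the inequality applies directly. Together these show $\Gamma_R$ is not MSN-hyperenergetic, which completes the argument.
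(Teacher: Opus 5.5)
Your proof is correct, but it takes a different route from the paper. The paper does not notice that ``with unity'' is incompatible with the hypothesis $Z(R)=\{0\}$ of Theorem \ref{order-p^2q}, since $0\neq 1\in Z(R)$. It simply substitutes the two graph shapes from that theorem into \eqref{Eq-2} and compares the result with $2(p^2q-2)^3$ using two ad hoc inequality chains. Strictly speaking, the paper applies a theorem whose hypothesis fails, whereas your decomposition $R\cong R_p\times R_q$ proves the statement exactly as written, vacuously.

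That decomposition uses $p\neq q$, which is the intended reading. If $p=q$ were allowed, non-commutative unital rings of order $p^3$ do exist, and the claim would instead follow from Corollary \ref{various-ring-p}(b) and Theorem \ref{various-cons-1}.

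Your argument actually gives more. For $p\neq q$ the ideal $R_q$ has prime order, hence is commutative, and $R_pR_q=R_qR_p=0$, so $R_q\subseteq Z(R)$. Thus no ring of order $p^2q$ has trivial center, and the class in Theorem \ref{order-p^2q} is itself empty. So your second half, like the paper's entire proof, is really only about the graph inequality.

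On that inequality, the two approaches differ as follows. The paper's chains work with the two explicit shapes and give case-specific intermediate bounds, such as $\bigl(p^2q-1-\frac{p^2q-1}{t-1}\bigr)^3$ in case (a). Your lemma buys uniformity: one argument covers every disjoint union of complete graphs with at least two components, handling (a) and (b) at once. It also avoids the paper's unproved bounds $(p^2-1)l_3\geq 3$ and $(pq-1)l_4\geq 3$; if $l_3=l_4=0$, the first step of the paper's chain in (b) is false. Thanks to the two-component hypothesis, your lemma is also the correct form of the general theorem stated after Theorem \ref{cc-ring-energies}, which as stated fails for $\Gamma=K_n$.

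There is one slip. In step 3, $\sum_i a_i=n-r$ where $r$ counts distinct families, not components. So in case (a), where $r=1$ but there are at least two copies of $K_{t-1}$, strictness must come from step 1. Step 1 is strict as soon as some $l_i\geq 2$, so your extra condition $m_i\geq 2$ is unnecessary. Listing the components individually, all with $l_i=1$, removes the issue: step 1 becomes trivial, and step 3 is strict exactly when there are at least two components.
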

\begin{proof}
The fact that  $\Gamma_R$ is MSN-integral follows from the expressions of $\sigma_{\snm}(\Gamma_R)$ given in Theorem \ref{order-p^2q}.

By Theorem \ref{order-p^2q} we also have $\E_{\sn}(\Gamma_R) = \frac{2(p^2q - 1)(t - 2)^3}{t - 1}$ (where $t \in \{p, q, p^2, pq\}$ and $(t - 1)$  divides $(p^2q - 1)$)
or $
\E_{\sn}(\Gamma_R) = 2l_1(p - 2)^3 + 2l_2(q - 2)^3 + 2l_3(p^2-2)^3+ 2l_4(pq -2)^3$,	(where $(p - 1)l_1 + (q - 1)l_2 + (p^2 - 1)l_3 + (pq - 1)l_4 = p^2q -1$). Note that $|V(\Gamma_R)| = p^2q -1$ and so
$\E_{\sn}(K_{|V(\Gamma_R)|}) = 2(p^2q -2)^3$. Suppose that $\E_{\sn}(\Gamma_R) = \frac{2(p^2q - 1)(t - 2)^3}{t - 1}$. Since
\begin{align*}
\frac{(p^2q - 1)(t - 2)^3}{t - 1} &< \left(\frac{p^2q - 1}{t - 1}\right)^3\times (t - 2)^3\\
&= \left(\frac{p^2q - 1}{t - 1}\times (t - 2)\right)^3\\
&= \left(\frac{p^2q - 1}{t - 1}\times (t - 1) - \frac{p^2q - 1}{t - 1}\right)^3\\
&= \left(p^2q - 1 -  \frac{p^2q - 1}{t - 1}\right)^3\\
& < (p^2q - 1 - 1)^3 = (p^2q - 2)^3,
\end{align*}
we have $\E_{\sn}(\Gamma_R) < \E_{\sn}(K_{|V(\Gamma_R)|})$. 

Suppose that $
\E_{\sn}(\Gamma_R) = 2l_1(p - 2)^3 + 2l_2(q - 2)^3 + 2l_3(p^2-2)^3+ 2l_4(pq -2)^3.
$ Note that $(p - 1)l_1 + (q - 1)l_2 + (p^2 - 1)l_3 + (pq - 1)l_4 = p^2q -1$ and so $\E_{\sn}(K_{|V(\Gamma_R)|}) = 2\left((p - 1)l_1 + (q - 1)l_2 + (p^2 - 1)l_3 + (pq - 1)l_4 - 1 \right)^3$. Since $(p - 1)l_1 + (q - 1)l_2 \geq 2$, $(p^2 - 1)l_3 \geq 3$ and $(pq - 1)l_4 \geq 3$ we have 
\begin{align*}
((p - 1)l_1 + (q - 1)l_2 &+ (p^2 - 1)l_3 + (pq - 1)l_4 - 1)^3\\
& > ((p - 1)l_1 + (q - 1)l_2)^3  + (p^2 - 1)^3l_3^3 +  (pq - 1)^3l_4^3 \\
& > (p - 1)^3l_1^3 + (q - 1)^3l_2^3  + (p^2 - 1)^3l_3^3 +  (pq - 1)^3l_4^3 \\
& > l_1(p - 1)^3 + l_2(q - 1)^3  + l_3(p^2 - 1)^3 +  l_4(pq - 1)^3 \\
& > l_1(p - 2)^3 + l_2(q - 2)^3  + l_3(p^2 - 2)^3 +  l_4(pq - 2)^3.
\end{align*}
Therefore, $\E_{\sn}(K_{|V(\Gamma_R)|}) > \E_{\sn}(\Gamma_R)$. Hence,  $\Gamma_R$ is not MSN-hyperenergetic. 
\end{proof}

\begin{thm}\label{order-p^3q-pq}
Let $R$ be a ring   with  unity, $|R| = p^3q$ and $|Z(R)| = pq$. Then $\sigma_{\snm}(\Gamma_R) = \Big{\{} [-(p^2q - pq-1)^2]^{(p + 1)(p^2q - pq - 1)}, \quad [(p^2q - pq-1)^3]^{p + 1}\Big{\}}
$
and 
$\E_{\sn}(\Gamma_R) = 2(p + 1)(p^2q - pq - 1)^3$.
\end{thm}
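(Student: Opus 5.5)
The plan is to follow the same pattern as Theorems \ref{order-p4} and \ref{order-p5}: first identify the structure of $\Gamma_R$ as a disjoint union of complete graphs, using the classification of Vatandoost et al.\ \cite{vrb14}, and then read off the spectrum and energy from \eqref{Eq-1} and \eqref{Eq-2}.

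For the structure, I would invoke the result in \cite{vrb14} (the $p^3q$ analogue of \cite[Theorem 2.9]{vrb14}) for rings with unity of order $p^3q$ with $|Z(R)| = pq$. It should give $\Gamma_R = (p+1)K_{p^2q - pq}$.

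If I cannot cite that result directly, I would derive it instead. Since $|R/Z(R)| = p^2$ and $R/Z(R)$ cannot be cyclic for a non-commutative ring, we get $\frac{R}{Z(R)} \cong \mathbb{Z}_p \times \mathbb{Z}_p$. Then \cite[Theorem 2.4]{Dutta-Nath-CCR} gives $\Gamma_R = (p+1)K_{(p-1)m}$ with $m = pq$. This is exactly $(p+1)K_{p^2q - pq}$.

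In fact, this second route means the statement is just Theorem \ref{ZpxZp} with $m = pq$. Substituting $(p-1)pq - 1 = p^2q - pq - 1$ into \eqref{Eq-1} gives the eigenvalue $-(p^2q - pq - 1)^2$ with multiplicity $(p+1)(p^2q - pq - 1)$, and the eigenvalue $(p^2q - pq - 1)^3$ with multiplicity $p+1$. Then \eqref{Eq-2} gives $\E_{\sn}(\Gamma_R) = 2(p+1)(p^2q - pq - 1)^3$.

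The only point needing care is the justification that $R/Z(R) \cong \mathbb{Z}_p \times \mathbb{Z}_p$. This rests on the standard fact that $R/Z(R)$ is not cyclic when $R$ is non-commutative: if $R = Z(R) + \langle x \rangle$ additively, then all elements commute. A group of order $p^2$ that is not cyclic must be $\mathbb{Z}_p \times \mathbb{Z}_p$. Everything after that is substitution.
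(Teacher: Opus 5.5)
Your first route is exactly the paper's proof. The paper cites \cite[Theorem 2.12]{vrb14} for $\Gamma_R = (p+1)K_{pq(p-1)}$ and then reads off the spectrum and energy from \eqref{Eq-1} and \eqref{Eq-2}.

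Your second route is genuinely different, and it is also correct. The condition $|Z(R)| = pq < |R|$ makes $R$ non-commutative. The additive group $R/Z(R)$ has order $p^2$ and cannot be cyclic, by your $Z(R) + \langle x \rangle$ argument. Hence $R/Z(R) \cong \mathbb{Z}_p \times \mathbb{Z}_p$, and the statement is literally Theorem~\ref{ZpxZp} with $m = pq$, since $(p-1)pq - 1 = p^2q - pq - 1$.

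Compared with the paper, this route has several advantages:
\begin{enumerate}
\item It avoids the order-$p^3q$ classification of Vatandoost et al.\ altogether.
\item It shows the unity hypothesis plays no role.
\item It places the result where it belongs, as a special case of Section~2. The paper itself uses exactly this reduction for Corollary~\ref{various-ring-p}. The same remark also settles Theorem~\ref{order-p4}(b) and Theorem~\ref{order-p5}(b), where again $|R/Z(R)| = p^2$.
\end{enumerate}
The citation route gains nothing extra here beyond stylistic uniformity with the other results of Section~4.
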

\begin{proof}
It was shown in \cite[Theorem 2.12]{vrb14}) that $\Gamma_R = (p + 1)K_{pq(p - 1)}$. Therefore, by \eqref{Eq-1}, we have
$\sigma_{\snm}(\Gamma_R) = \Big{\{} [-(pq(p - 1)-1)^2]^{(p + 1)(pq(p - 1) - 1)},  [(pq(p - 1)-1)^3]^{p + 1}\Big{\}}
$
and 
$\E_{\sn}(\Gamma_R) = 2(p + 1)(pq(p - 1) - 1)^3$.
\end{proof}

\begin{thm}\label{order-p^3q}
Let $R$ be a ring   with  unity, $|R| = p^3q$ and $|Z(R)| = p^2$. 
\begin{enumerate}		
\item If \, $(p - 1) \mid (pq - 1)$ \, then  \,
$\sigma_{\snm}(\Gamma_R) = \Big{\{} [-(p^3 - p^2-1)^2]^{\frac{(pq - 1)(p^3 - p^2 - 1)}{p - 1}},$ $[(p^3 - p^2-1)^3]^{\frac{pq - 1}{p - 1}}\Big{\}}
		$
		and 
		$\E_{\sn}(\Gamma_R) = \frac{2(pq - 1)(p^3 - p^2 - 1)^3}{p - 1}$.
		
\item If $(q - 1)\mid (pq - 1)$ \, then  \,
$\sigma_{\snm}(\Gamma_R) \,\, = \,\, \Big{\{} [-(p^2q - p^2-1)^2]^{\frac{(pq - 1)(p^2q - p^2 - 1)}{q - 1}},$  $ [(p^2q - p^2-1)^3]^{\frac{pq - 1}{q - 1}}\Big{\}}
	$
	and 
	$\E_{\sn}(\Gamma_R) = \frac{2(pq - 1)(p^2q - p^2 - 1)^3}{q - 1}$.	
		
\item If $pq - 1 = (p - 1)l_1 + (q - 1)l_2$ then 
$\sigma_{\snm}(\Gamma_R) = \Big{\{} [-(p^3 - p^2-1)^2]^{l_1(p^3 - p^2 - 1)},$ $[(p^3 - p^2-1)^3]^{l_1}, \quad [-(p^2q - p^2-1)^2]^{l_2(p^2q - p^2 - 1)}, \quad 
[(p^2q - p^2-1)^3]^{l_2}\Big{\}}
$
and 
$\E_{\sn}(\Gamma_R) = 2l_1(p^3 - p^2 - 1)^3 + 2l_2(p^2q - p^2 - 1)^3$.
\end{enumerate}
\end{thm}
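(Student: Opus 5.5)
The plan follows the pattern of Theorem \ref{order-p^3q-pq}. First invoke the structural result of Vatandoost et al.\ \cite[Theorem 2.12]{vrb14} for rings with unity of order $p^3q$ and $|Z(R)| = p^2$. Here $R/Z(R)$ has order $pq$ and every centralizer of a non-central element is a commutative subring properly containing $Z(R)$. So each centralizer has order $p^3$ or $p^2q$, and distinct centralizers intersect exactly in $Z(R)$. Hence $\Gamma_R$ is a disjoint union of complete graphs of the form $K_{p^3 - p^2}$ and $K_{p^2q - p^2}$. The precise statement we expect from \cite{vrb14} is:
\begin{itemize}
\item $\Gamma_R = \frac{pq-1}{p-1}K_{p^2(p-1)}$ when $(p-1)\mid(pq-1)$;
\item $\Gamma_R = \frac{pq-1}{q-1}K_{p^2(q-1)}$ when $(q-1)\mid(pq-1)$;
\item $\Gamma_R = l_1K_{p^2(p-1)} \cup l_2K_{p^2(q-1)}$ with $(p-1)l_1 + (q-1)l_2 = pq-1$ in general.
\end{itemize}
The counting is a consistency check: $|V(\Gamma_R)| = p^3q - p^2 = p^2(pq-1)$, so the numbers of components must satisfy exactly these divisibility and linear relations.

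With the decomposition in hand, each part is a direct substitution into \eqref{Eq-1} and \eqref{Eq-2}. In part (a), take $r = 1$, $m_1 = p^2(p-1) = p^3 - p^2$ and $l_1 = \frac{pq-1}{p-1}$. This gives eigenvalue $-(p^3-p^2-1)^2$ with multiplicity $\frac{(pq-1)(p^3-p^2-1)}{p-1}$, and eigenvalue $(p^3-p^2-1)^3$ with multiplicity $\frac{pq-1}{p-1}$. The energy is $\frac{2(pq-1)(p^3-p^2-1)^3}{p-1}$.

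Part (b) is identical with $m_1 = p^2(q-1) = p^2q - p^2$ and $l_1 = \frac{pq-1}{q-1}$. In part (c), take $r = 2$ with $(m_1, l_1) = (p^3-p^2, l_1)$ and $(m_2, l_2) = (p^2q - p^2, l_2)$. Equation \eqref{Eq-2} then gives $2l_1(p^3-p^2-1)^3 + 2l_2(p^2q-p^2-1)^3$.

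The only real obstacle is the first step: correctly quoting the structure of $\Gamma_R$ from \cite{vrb14}, including the component sizes $p^2(p-1)$ and $p^2(q-1)$ and the conditions attached to each case. If the citation is unavailable, I would justify it as follows. For $x \notin Z(R)$, the centralizer $C(x)$ is a subring containing $Z(R)$ and $x$, with index in $R$ greater than $1$. Its order is divisible by $p^2$ and strictly between $p^2$ and $p^3q$, and it cannot be all of $R$. A ring of order $p^3q$ with central quotient of order $pq$ forces $|C(x)| \in \{p^3, p^2q\}$. The centralizers are commutative, since $R/Z(R)$ of order $pq$ has the property that any subring strictly between $Z(R)$ and $R$ with cyclic additive quotient is commutative. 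So the centralizers partition $R \setminus Z(R)$, and commuting is an equivalence relation on non-central elements. The remaining computations are routine.
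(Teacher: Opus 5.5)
Your proposal is correct and is essentially the paper's proof. The paper likewise quotes \cite[Theorem 2.12]{vrb14} for the decompositions $\frac{pq-1}{p-1}K_{p^2(p-1)}$, $\frac{pq-1}{q-1}K_{p^2(q-1)}$ and $l_1K_{p^2(p-1)}\cup l_2K_{p^2(q-1)}$, and substitutes into \eqref{Eq-1} and \eqref{Eq-2}. Your fallback is a sound replacement for the citation: each $C(x)/Z(R)$ has prime order, so $C(x)=Z(R)+\mathbb{Z}x$ is commutative and the centralizers partition $R\setminus Z(R)$. It gives form (c), with (a) and (b) as the cases $l_2=0$ and $l_1=0$.

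As an aside, for distinct primes $p\neq q$ the hypothesis can never hold. The $q$-primary part of $R$ is an ideal of prime order, hence commutative, and its products with the $p$-primary part are zero. So it lies in $Z(R)$ and $q$ divides $|Z(R)|$, which makes $|Z(R)|=p^2$ impossible and the statement vacuous.
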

\begin{proof}
(a) It was shown in \cite[Theorem 2.12]{vrb14}) that $\Gamma_R = \frac{pq - 1}{p - 1}K_{p^2(p - 1)}$. Therefore, by \eqref{Eq-1}, we have
$\sigma_{\snm}(\Gamma_R) = \Big{\{} [-(p^2(p - 1)-1)^2]^{\frac{pq - 1}{p - 1}\times(p^2(p - 1) - 1)},  [(p^2(p - 1)-1)^3]^{\frac{pq - 1}{p - 1}}\Big{\}}
$
and 
$\E_{\sn}(\Gamma_R) = 2\times\frac{pq - 1}{p - 1}\times(p^2(p - 1) - 1)^3$.

(b) It was shown in \cite[Theorem 2.12]{vrb14}) that $\Gamma_R =  \frac{pq - 1}{q - 1}K_{p^2(q - 1)}$. Therefore, by \eqref{Eq-1} and \eqref{Eq-2}, we have
$\sigma_{\snm}(\Gamma_R) = \Big{\{} [-(p^2(q - 1)-1)^2]^{\frac{pq - 1}{q - 1}\times(p^2(q - 1) - 1)}, \quad  [(p^2(q - 1)-1)^3]^{\frac{pq - 1}{q - 1}}\Big{\}}
$
and 
$\E_{\sn}(\Gamma_R) = 2\times\frac{pq - 1}{q - 1}\times(p^2(q - 1) - 1)^3$.

(c) It was shown in \cite[Theorem 2.12]{vrb14}) that $\Gamma_R = l_1K_{p^2(p - 1)}\cup l_2K_{p^2(q - 1)}$.  Therefore, by \eqref{Eq-1} and \eqref{Eq-2}, we have

$\sigma_{\snm}(\Gamma_R) =  \Big{\{} [-(p^2(p - 1)-1)^2]^{l_1(p^2(p - 1) - 1)}, \quad [(p^2(p - 1)-1)^3]^{l_1},$ 

\qquad\qquad\qquad\qquad\qquad\qquad$[-(p^2(q - 1)-1)^2]^{l_2(p^2(q - 1) - 1)}, \quad
[(p^2(q - 1)-1)^3]^{l_2}\Big{\}}
$

and 
$\E_{\sn}(\Gamma_R) = 2l_1(p^2(p - 1) - 1)^3 + 2l_2(p^2(q - 1) - 1)^3$.
\end{proof}

We conclude this section with the following result.
\begin{thm}
If $R$ is a non-commutative ring with unity such that $|R| = p^3q$ and $|Z(R)|$ is not a prime  then   $\Gamma_R$ is MSN-integral but not MSN-hyperenergetic.
\end{thm}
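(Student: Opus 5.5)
Integrality is read off from the spectra in Theorem \ref{order-p^3q-pq} and Theorem \ref{order-p^3q}: every eigenvalue there is an integer. In case (a) and case (b) of Theorem \ref{order-p^3q} the multiplicities are integers because of the divisibility hypotheses. For hyperenergy, we first identify which centers can occur. Since $R$ has unity, $Z(R)$ is a subring containing $1$, so $|Z(R)|$ divides $p^3q$. Non-commutativity forces $R/Z(R)$ to be non-cyclic, so $|R/Z(R)|$ is not $1$ or a prime. Together with the hypothesis that $|Z(R)|$ is not a prime, and using the classification in \cite[Theorem 2.12]{vrb14}, the remaining possibilities are $|Z(R)| = pq$ and $|Z(R)| = p^2$. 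These are exactly the cases treated in Theorem \ref{order-p^3q-pq} and Theorem \ref{order-p^3q}. We split into these two cases.

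\textbf{Case 1.} $|Z(R)| = pq$. Here $|V(\Gamma_R)| = p^3q - pq = (p+1)(p^2q - pq)$. We repeat the argument used for order $p^4$:
\[
(p+1)(p^2q-pq-1)^3 < \big((p+1)(p^2q-pq) - (p+1)\big)^3 < \big((p+1)(p^2q-pq)-1\big)^3 .
\]
Hence $\E_{\sn}(\Gamma_R) < \E_{\sn}(K_{|V(\Gamma_R)|})$.

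\textbf{Case 2.} $|Z(R)| = p^2$. Here $|V(\Gamma_R)| = p^3q - p^2$.

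In subcases (a) and (b), $\Gamma_R = lK_{m}$ with $l m = p^3q - p^2$ and $l = \frac{pq-1}{p-1}$ or $\frac{pq-1}{q-1}$. If $l \geq 2$, then
\[
l(m-1)^3 < l^3(m-1)^3 = (lm - l)^3 < (lm-1)^3 .
\]
If $l = 1$, the graph is complete and equality holds, which still rules out hyperenergy.

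In subcase (c), $\Gamma_R = l_1K_{a}\cup l_2K_{b}$ with $a = p^2(p-1)$, $b = p^2(q-1)$ and $l_1a + l_2b = p^3q - p^2$. We follow the order-$p^4$ argument: $(l_1a + l_2b - 1)^3$ exceeds $l_1^3a^3 + l_2^3b^3$ because the cross terms dominate the $-1$. This in turn exceeds $l_1(a-1)^3 + l_2(b-1)^3$.

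The main obstacle is the degenerate configurations. Subcase (c) has $l_1 = 0$ or $l_2 = 0$, and then it reduces to (a) or (b). Subcases (a) and (b) have $l = 1$, which needs non-strict inequality or a check that it cannot occur. Also, when $q = 2$, $b = p^2$ is small, so the cross-term estimate must be checked directly. We expect to handle these by noting that when $l_1, l_2 \geq 1$ we have $l_1a \geq 4$ and $l_2b \geq 4$, which make the expansion $(x+y-1)^3 > x^3 + y^3$ valid. Throughout, we also assume $p, q$ are such that the block sizes exceed $1$, as guaranteed by \cite[Theorem 2.12]{vrb14}.
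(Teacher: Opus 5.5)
Your proof is correct and follows essentially the same route as the paper. Like the paper, you restrict to $|Z(R)| \in \{pq, p^2\}$ and read integrality off the two preceding spectrum theorems. You then beat $\E_{\sn}(K_{|V(\Gamma_R)|})$ in the same two ways: via $l(m-1)^3 < (lm-l)^3 < (lm-1)^3$ when all blocks are equal, and via $(x+y-1)^3 > x^3+y^3$ (for $x,y \geq 2$) when there are two block sizes.

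Your extra caution about degenerate cases is harmless but unnecessary, since $\frac{pq-1}{p-1}$ and $\frac{pq-1}{q-1}$ always exceed $1$. In fact, your own non-cyclicity remark shows that for $p \neq q$ the case $|Z(R)| = p^2$ cannot occur at all: an additive quotient of order $pq$ is cyclic, which would force $R$ to be commutative.
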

\begin{proof}
If $R$ is a non-commutative ring with unity such that $|R| = p^3q$ and $|Z(R)|$ is not a prime  then $|Z(R)| = pq$ or $p^2$. Therefore,
the fact that  $\Gamma_R$ is MSN-integral follows from the expressions of $\sigma_{\snm}(\Gamma_R)$ given in Theorem \ref{order-p^3q-pq} and Theorem \ref{order-p^3q}.

If $|Z(R)| = pq$ then $|V(\Gamma_R)| = p^3q - pq$. Therefore, $\E_{\sn}(K_{|V(\Gamma_R)|}) = 2(p^3q - pq -1)^3$. By Theorem \ref{order-p^3q-pq} we have
$\E_{\sn}(\Gamma_R) = 2((p + 1)(p^2q - pq - 1)^3$. Note that
\begin{align*}
((p + 1)(p^2q - pq - 1)^3 &< ((p + 1)^3(p^2q - pq - 1)^3\\
&= ((p + 1)(p^2q - pq - 1))^3\\
&= ((p + 1)(p^2q - pq) - (p+1))^3\\
&= (p^3q - pq - (p+1))^3
< (p^3q - pq - 1)^3.
\end{align*}
Therefore, $\E_{\sn}(\Gamma_R) < \E_{\sn}(K_{|V(\Gamma_R)|})$.

If $|Z(R)| = p^2$ then $|V(\Gamma_R)| = p^3q - p^2$. Therefore, $\E_{\sn}(K_{|V(\Gamma_R)|}) = 2(p^3q - p^2 -1)^3$. By Theorem \ref{order-p^3q} we have $\E_{\sn}(\Gamma_R) = \frac{2(pq - 1)(p^2t - p^2 - 1)^3}{t - 1}$ (where $t \in \{p, q\}$ and $(t - 1)$ divides $(pq - 1)$)
or $\E_{\sn}(\Gamma_R) = 2l_1(p^3 - p^2 - 1)^3 + 2l_2(p^2q - p^2 - 1)^3$ (where $pq - 1 = (p - 1)l_1 + (q - 1)l_2$).

Thus, if $t \in \{p, q\}$ and $t - 1$ divides $pq - 1$ then we have
\begin{align*}
	\frac{(pq - 1)(p^2t - p^2 - 1)^3}{t - 1} &< \left(\frac{pq - 1}{t - 1}\right)^3(p^2t - p^2 - 1)^3\\
	&=\left(\frac{pq - 1}{t - 1}\times(p^2t - p^2 - 1)\right)^3\\
	&=\left(\frac{pq - 1}{t - 1}\times(p^2t - p^2) - \frac{pq - 1}{t - 1}\right)^3\\
	&= \left(p^3q - p^2 - \frac{pq - 1}{t - 1}\right)^3 < (p^3q - p^2 - 1)^3.
\end{align*}
Therefore, $\E_{\sn}(\Gamma_R) < \E_{\sn}(K_{|V(\Gamma_R)|})$.

Suppose that $\E_{\sn}(\Gamma_R) = 2l_1(p^3 - p^2 - 1)^3 + 2l_2(p^2q - p^2 - 1)^3$. Note that $|V(\Gamma_R)| = l_1(p^3 - p^2) + l_2(p^2q - p^2)$ and so $\E_{\sn}(K_{|V(\Gamma_R)|}) = 2(l_1(p^3 - p^2) + l_2(p^2q - p^2) - 1)^3$. Since  $l_1(p^3 - p^2) \geq 4$ and $l_2(p^2q - p^2) \geq 4$ we have
\begin{align*}
(l_1(p^3 - p^2) + l_2(p^2q - p^2) - 1)^3 &> l_1^3(p^3 - p^2)^3 + l_2^3(p^2q - p^2)^3\\
&> l_1(p^3 - p^2)^3 + l_2(p^2q - p^2)^3\\
&> l_1(p^3 - p^2 - 1)^3 + l_2(p^2q - p^2 -1)^3.
\end{align*}
Therefore, $\E_{\sn}(K_{|V(\Gamma_R)|}) > \E_{\sn}(\Gamma_R)$. 
Hence, $\Gamma_R$ is  not MSN-hyperenergetic.
\end{proof}

\section{Conclusion}
A non-commutative ring $R$ is called a commutative centralizer ring (for short CC-ring) if all the centralizers of non-central elements of $R$ are commutative. Interestingly, all the rings considered in Section 2-4 are  CC-rings and their commuting graphs are MSN-integral but not MSN-hyperenergetic. Therefore, the following conjecture is natural.
\begin{conj}
If $R$ is a finite non-commutative CC-ring then $\Gamma_R$ is MSN-integral but not MSN-hyperenergetic.
\end{conj}

The following result proves the above conjecture.  
\begin{thm}\label{cc-ring-energies}
If $S_1, S_2, \ldots, S_n$ are the non-identical centralizers  of  $s \in R \setminus~Z(R)$, where $R$ is  a finite CC-ring and $|Z(R)| = m$,
	then

\noindent $\sigma_{\snm}(\Gamma_R) = \Big{\{} [-(|S_1| - m -1)^2]^{|S_1| - m - 1},\, [(|S_1| - m - 1)^3]^1, \, [-(|S_2| - m -1)^2]^{|S_2| - m - 1},$ 

\qquad\qquad$[(|S_2| - m - 1)^3]^1, \dots, [-(|S_n| - m -1)^2]^{|S_n| - m - 1},\,\, [(|S_n| - m - 1)^3]^1,\Big{\}}$  

\noindent and $\E_{\sn}(\Gamma_R) = 2 \displaystyle{\sum_{i = 1}^n} (|S_i| - m - 1)^3$. Further, $\E_{\sn}(\Gamma_R) < \E_{\sn}(K_{|R| - m})$.
%
\end{thm}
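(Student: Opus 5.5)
The plan is to first pin down the structure of $\Gamma_R$ and then apply \eqref{Eq-1} and \eqref{Eq-2}, as in Sections 2--4. For a CC-ring $R$ and $s \in R \setminus Z(R)$, the centralizer $C_R(s)$ is a commutative subring containing $Z(R)$. If $t \in C_R(s)\setminus Z(R)$, then $C_R(t) \supseteq C_R(s)$, because $C_R(s)$ is commutative and contains $t$. Since $s \in C_R(t)$ and $C_R(t)$ is commutative, we also get $C_R(t) \subseteq C_R(s)$. Hence $C_R(t) = C_R(s)$. So the distinct centralizers $S_1, \dots, S_n$ of non-central elements satisfy $S_i \cap S_j = Z(R)$ for $i \neq j$. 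Moreover, $R\setminus Z(R)$ is the disjoint union of the sets $S_i \setminus Z(R)$.

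Two non-central elements commute exactly when they lie in the same $S_i$. Indeed, if $xy=yx$ then $y \in C_R(x)$, and the argument above gives $C_R(y)=C_R(x)$. Therefore
\[
\Gamma_R = K_{|S_1|-m} \cup K_{|S_2|-m} \cup \cdots \cup K_{|S_n|-m}.
\]
Substituting $m_i = |S_i| - m$ and $l_i = 1$ into \eqref{Eq-1} and \eqref{Eq-2} gives the stated spectrum and $\E_{\sn}(\Gamma_R) = 2\sum_{i=1}^n (|S_i|-m-1)^3$. The MSN-integrality follows at once.

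For the inequality, put $a_i = |S_i| - m \geq 1$, so that $|R| - m = \sum_i a_i$ and $\E_{\sn}(K_{|R|-m}) = 2(\sum_i a_i - 1)^3$. I need
\[
\sum_{i=1}^n (a_i - 1)^3 < \Big(\sum_{i=1}^n a_i - 1\Big)^3.
\]
Since $R$ is non-commutative, $n \geq 2$; in fact $n\ge 3$, as a ring cannot be a union of two proper subgroups, but $n\ge 2$ suffices. By superadditivity of $x \mapsto x^3$ on nonnegative reals,
\[
\sum_i (a_i-1)^3 \leq \Big(\sum_i (a_i - 1)\Big)^3 = \Big(\sum_i a_i - n\Big)^3 < \Big(\sum_i a_i - 1\Big)^3.
\]
The last step is strict because $n \geq 2$ and $\sum_i a_i - n \geq 0$.

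The main point needing care is the structural claim that the $S_i\setminus Z(R)$ partition the vertex set into cliques with no edges between them, which relies on the centralizer-equality argument above. The final inequality is routine once $n\ge 2$ is noted, and it is the same pattern as the earlier case computations.
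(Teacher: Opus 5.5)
Your proposal is correct and follows the paper's route: obtain $\Gamma_R = K_{|S_1|-m}\cup\cdots\cup K_{|S_n|-m}$, apply \eqref{Eq-1} and \eqref{Eq-2}, and compare cubes. There are two differences. First, you prove the clique decomposition directly from the CC-property, whereas the paper simply cites \cite[Theorem 2.1]{Dutta-Nath-CCR}. Second, you put the strict inequality on the step $\sum_i a_i - n < \sum_i a_i - 1$, and that is actually the safer placement. The paper instead asserts strict superadditivity $\bigl(\sum_i (a_i-1)\bigr)^3 > \sum_i (a_i-1)^3$, which degenerates to $0>0$ when every $a_i=1$ (e.g.\ a non-commutative ring of order $4$ with $Z(R)=\{0\}$, where $\Gamma_R=3K_1$).
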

\begin{proof}
It was shown in \cite[Theorem 2.1]{Dutta-Nath-CCR} that $\Gamma_R = K_{|S_1| - m} \cup K_{|S_2| - m} \cup \cdots \cup K_{|S_n| - m} $. Therefore, by \eqref{Eq-1} and \eqref{Eq-2}, we have

\noindent $\sigma_{\snm}(\Gamma_R) = \Big{\{} [-(|S_1| - m -1)^2]^{|S_1| - m - 1},\, [(|S_1| - m - 1)^3]^1, \, [-(|S_2| - m -1)^2]^{|S_2| - m - 1},$ 

\qquad\qquad\quad$[(|S_2| - m - 1)^3]^1, \dots, [-(|S_n| - m -1)^2]^{|S_n| - m - 1},\,\, [(|S_n| - m - 1)^3]^1,\Big{\}}$  

\noindent and $\E_{\sn}(\Gamma_R) = 2 \displaystyle{\sum_{i = 1}^n} (|S_i| - m - 1)^3$.
%
 
We have
\begin{align*}
\E_{\sn}(K_{|R| - m}) &= 2(|R| - m - 1)^3 \\
&= 2(|S_1| - m + |S_2| - m + \cdots + |S_n| - m  - 1)^3\\
&> 2(|S_1| - m -1 + |S_2| - m -1 + \cdots + |S_n| - m  - 1)^3\\
&> 2(|S_1| - m -1)^3 + (|S_2| - m -1)^3 + \cdots + (|S_n| - m  - 1)^3.
\end{align*}
Hence, the result follows.
\end{proof}
More generally,  we have proved the following result.
\begin{thm}
If $\Gamma = l_1K_{m_1} \cup l_2K_{m_2} \cup \cdots \cup l_rK_{m_r}$ then $\Gamma$ is MSN-integral and $\E_{\sn}(K_{|V(\Gamma)|}) >  \E_{\sn}(\Gamma)$.   
\end{thm}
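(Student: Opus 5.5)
The plan is to treat $\Gamma$ as a disjoint union of complete graphs and combine the explicit spectrum in \eqref{Eq-1} with an elementary cubic inequality, in the same spirit as the proof of Theorem \ref{cc-ring-energies}.

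\textbf{Integrality.} By \eqref{Eq-1}, every eigenvalue of $\snm(\Gamma)$ has the form $-(m_i-1)^2$ or $(m_i-1)^3$. These are integers, so $\Gamma$ is MSN-integral. No further work is needed here.

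\textbf{Setting up the inequality.} List the components of $\Gamma$ individually as $K_{a_1}, \dots, K_{a_N}$, where $N = l_1 + \cdots + l_r$ and each $a_j \geq 1$. Then $n := |V(\Gamma)| = \sum_j a_j$. By \eqref{Eq-2},
\[
\E_{\sn}(\Gamma) = 2\sum_{j=1}^N (a_j-1)^3 \quad \text{and} \quad \E_{\sn}(K_n) = 2(n-1)^3 .
\]
Put $x_j = a_j - 1 \geq 0$. Then
\[
n - 1 = \sum_{j=1}^N x_j + (N-1).
\]

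\textbf{Key inequality.} On nonnegative reals, $(x+y)^3 \geq x^3 + y^3$, with strict inequality whenever $x, y > 0$. By induction this gives $\bigl(\sum_j x_j\bigr)^3 \geq \sum_j x_j^3$. Also, $t \mapsto t^3$ is strictly increasing on $[0, \infty)$. Hence, provided $N \geq 2$,
\[
(n-1)^3 = \Bigl(\sum_j x_j + N - 1\Bigr)^3 > \Bigl(\sum_j x_j\Bigr)^3 \geq \sum_j x_j^3 .
\]
This is exactly $\E_{\sn}(K_n) > \E_{\sn}(\Gamma)$. This argument also covers the degenerate case in which all $a_j = 1$, where $\E_{\sn}(\Gamma) = 0 < 2(N-1)^3$.

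\textbf{Main obstacle.} The only real subtlety is the hypothesis $N \geq 2$, and it is genuinely needed. If $\Gamma = K_{m_1}$ is a single component, then $\Gamma = K_n$ and the two energies are equal, so the strict inequality fails. I would therefore state explicitly that $\Gamma$ has at least two components, i.e.\ $\sum_i l_i \geq 2$. This holds automatically in every ring setting of the paper, because a non-commutative ring has at least three distinct centralizers of non-central elements.

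I would also note that the chain used in the proof of Theorem \ref{cc-ring-energies} drops a factor $2$ from all but the first term. The cleaner superadditivity argument above avoids that slip, and it also covers the earlier case computations in Sections 2--4 uniformly.
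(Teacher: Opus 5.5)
Your proof is correct and is essentially the paper's argument: both compare $(|V(\Gamma)|-1)^3$ with a sum of cubes using superadditivity of $t\mapsto t^3$ on $[0,\infty)$. You work with the $N=\sum_i l_i$ individual components, so the strictness visibly comes from the slack $N-1\ge 1$. The paper instead groups by the blocks $l_im_i$ and writes ``$>$'' at several steps that are in general only ``$\ge$'' (e.g.\ the passage from $\sum_i (l_im_i)^3$ to $\sum_i l_im_i^3$, which is an equality when every $l_i=1$). Your caveat is a genuine correction that the paper's proof overlooks: for $\Gamma=K_{m_1}$ (i.e.\ $r=1$, $l_1=1$) the two energies coincide, so the hypothesis $\sum_i l_i\ge 2$ is needed, and, as you note, it holds automatically for commuting graphs of CC-rings.
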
 
\begin{proof}
The fact that `$\Gamma$ is MSN-integral' is clear from \eqref{Eq-1}. Note that $|V(\Gamma)| = l_1m_1 + l_2m_2 + \cdots + l_rm_r$ and so $\E_{\sn}(K_{|V(\Gamma)|}) = 2(l_1m_1 + l_2m_2 + \cdots + l_rm_r - 1)^3$. We have
\begin{align*}
(l_1m_1 + l_2m_2 + \cdots + l_rm_r - 1)^3 &\geq (l_1m_1 + l_2m_2 + \cdots + l_rm_r - l_r)^3\\
& > (l_1m_1)^3 + (l_2m_2)^3 + \cdots + (l_r^3(m_r - 1))^3\\
& > l_1m_1^3 + l_2m_2^3 + \cdots + l_r^3(m_r - 1)^3\\
& > l_1(m_1 - 1)^3 + l_2(m_2 - 1)^3 + \cdots + l_r^3(m_r - 1)^3.
\end{align*}
Hence, the result follows from \eqref{Eq-2}.
\end{proof}

The above arguments motivate us to prove Conjecture \ref{Conj-1.1} and Conjecture \ref{Conj-1.2}. 
The following result proves Conjecture \ref{Conj-1.1}.
\begin{thm}\label{Conj-3-NFDS-2021}
If $\Gamma = l_1K_{m_1} \cup l_2K_{m_2} \cup \cdots \cup l_rK_{m_r}$, where $1 \leq m_1 < m_2 < \cdots < m_r$, then $\Gamma$ is $\Ecn(K_{|V(\Gamma)|}) >  \Ecn(\Gamma)$.
\end{thm}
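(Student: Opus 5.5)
The plan is to compute $\cnspec(\Gamma)$ explicitly, exactly as \eqref{Eq-1} and \eqref{Eq-2} did for $\snm$, and then compare energies using an elementary superadditivity inequality. Note at the outset that the strict inequality needs a mild nondegeneracy hypothesis, which I will state where it is used.

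\textbf{Step 1: the spectrum.} Let $n=|V(\Gamma)|=\sum_i l_im_i$. Two distinct vertices of the same component $K_{m}$ have exactly $m-2$ common neighbours. Two vertices in different components have none. Hence, after ordering vertices by component, $\CN(\Gamma)$ is block diagonal with blocks $\CN(K_{m})=(m-2)A(K_{m})$. Since $\sigma(A(K_m))=\{[m-1]^1,[-1]^{m-1}\}$, each block has eigenvalues $(m-2)(m-1)$ once and $-(m-2)$ with multiplicity $m-1$. Therefore
\[
\Ecn(K_m)=2(m-1)(m-2), \qquad \Ecn(\Gamma)=2\sum_{i=1}^r l_i(m_i-1)(m_i-2), \qquad \Ecn(K_n)=2(n-1)(n-2).
\]
This works for $m_i=1$ or $2$ as well, since those blocks contribute $0$.

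\textbf{Step 2: the comparison.} Set $f(x)=(x-1)(x-2)=x^2-3x+2$. A direct expansion gives, for positive integers $a,b$,
\[
f(a+b)-f(a)-f(b)=2ab-2\ \ge 0,
\]
with equality iff $a=b=1$. Write $n$ as the sum of the component sizes $c_1,\dots,c_k$, listing each $m_i$ with multiplicity $l_i$, so $k=\sum_i l_i$. Merging the components one at a time gives $f(n)\ge \sum_j f(c_j)$, i.e.\ $\Ecn(K_n)\ge\Ecn(\Gamma)$.

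\textbf{Step 3: strictness (the main subtlety).} I expect this to be the main obstacle. If $\Gamma$ is connected ($k=1$), then $\Gamma=K_n$ and equality holds trivially. Also $\Gamma=2K_1$ gives $0=0$. So I would first assume $k\ge 2$ and $n\ge 3$, the implicit setting of Conjecture~\ref{Conj-1.1}.

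Order the merges so that the first one involves a component of size $\ge 2$ if one exists. That merge has $ab\ge 2$, hence is strict, and the remaining merges are $\ge$. If every component has size $1$, then $k=n\ge 3$, and the second merge combines a set of size $2$ with one of size $1$, so $ab=2$ and it is strict. In all cases $f(n)>\sum_j f(c_j)$, which gives $\Ecn(K_{|V(\Gamma)|})>\Ecn(\Gamma)$, i.e.\ $\Gamma$ is not CN-hyperenergetic. This proves Conjecture~\ref{Conj-1.1}.
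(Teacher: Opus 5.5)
Your proof is correct, but it takes a different route from the paper. The paper does not use superadditivity of $f(x)=(x-1)(x-2)$. Instead, writing $S=\sum_i l_im_i$, it bounds $(S-1)(S-2)$ below by $\bigl(\sum_i l_i(m_i-1)\bigr)\bigl(\sum_i l_i(m_i-2)\bigr)$, expands, discards the cross terms, and finishes with $l_i^2\ge l_i$. When $m_1=1$ the factor $m_1-2$ is negative and would spoil that product bound, so the paper splits into two cases: $m_1=1$ (where it first discards the isolated vertices) and $m_1>1$. It also quotes the formulas for $\Ecn(K_n)$ and $\Ecn(\Gamma)$, whereas you derive them from the block structure of $\CN(\Gamma)$.

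Your identity $f(a+b)-f(a)-f(b)=2ab-2$, applied by merging components one at a time, handles clique sizes $1$ and $2$ uniformly. More importantly, it shows exactly when equality occurs. The strict inequality in the statement is in fact false for $\Gamma=K_n$ ($r=1$, $l_1=1$) and for $\Gamma=2K_1$. The paper's chain of inequalities silently breaks in these cases. For instance, when $r=1$ the step ``$>\sum_i l_i^2(m_i-1)(m_i-2)$'' has no cross terms and is an equality, and if also $l_1=1$ every step is an equality. Your hypothesis $k\ge 2$, $n\ge 3$ is exactly the right one. Finally, Conjecture~\ref{Conj-1.1} (not CN-hyperenergetic) only needs the weak inequality $\Ecn(\Gamma)\le\Ecn(K_{|V(\Gamma)|})$, and your Step 2 gives that with no restriction at all.
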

\begin{proof}
We have $|V(\Gamma)| = \sum_{i = 1}^rl_im_i$ and so (by  \cite[Theorem 2]{NFDS-2021})
\[
\Ecn(K_{|V(\Gamma)|}) = 2\left(\sum_{i = 1}^rl_im_i-1\right)\left(\sum_{i = 1}^rl_im_i-2\right).
\] 
Also, $\Ecn(\Gamma) = 2\sum_{i=1}^rl_i(m_i-1)(m_i-2)$. Therefore, if $m_1 = 1$ then 
$$\Ecn(\Gamma) = 2\sum_{i=2}^rl_i(m_i-1)(m_i-2).
$$
We have
\begin{align*}
\left(\sum_{i = 1}^rl_im_i-1\right)\left(\sum_{i = 1}^rl_im_i-2\right) &> \left(\sum_{i = 2 }^rl_im_i-1\right)\left(\sum_{i = 2}^rl_im_i-2\right)\\
&\geq \left(\sum_{i = 2 }^rl_im_i-l_i\right)\left(\sum_{i = 2}^rl_im_i-2l_i\right)\\
&= \left(\sum_{i = 2 }^rl_i(m_i-1)\right)\left(\sum_{i = 2}^rl_i(m_i-2)\right)\\
& > \sum_{i = 2 }^rl_i^2(m_i-1)(m_i-2)\\
& \geq \sum_{i = 2 }^rl_i(m_i-1)(m_i-2).
\end{align*}
If $m_1 > 1$ then
\begin{align*}
	\left(\sum_{i = 1}^rl_im_i-1\right)\left(\sum_{i = 1}^rl_im_i-2\right) &\geq \left(\sum_{i = 1 }^rl_im_i-l_i\right)\left(\sum_{i = 1}^rl_im_i-2l_i\right)\\
	&= \left(\sum_{i = 1 }^rl_i(m_i-1)\right)\left(\sum_{i = 1}^rl_i(m_i-2)\right)\\
	& > \sum_{i = 1 }^rl_i^2(m_i-1)(m_i-2)\\
	& \geq \sum_{i = 1 }^rl_i(m_i-1)(m_i-2).
\end{align*}
Hence, $\Ecn(K_{|V(\Gamma)|}) > \Ecn(\Gamma)$ in both the cases.
\end{proof}
The following corollary proves Conjecture \ref{Conj-1.2}.
\begin{cor}
If  $R$ is  a finite CC-ring then $\Ecn(K_{|V(\Gamma_R)|}) > \Ecn(\Gamma_R)$.
\end{cor}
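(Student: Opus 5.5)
The plan is to reduce the corollary to Theorem \ref{Conj-3-NFDS-2021} using the structure of commuting graphs of CC-rings. By \cite[Theorem 2.1]{Dutta-Nath-CCR}, already used in the proof of Theorem \ref{cc-ring-energies}, if $S_1, S_2, \ldots, S_n$ are the distinct centralizers of non-central elements of the finite CC-ring $R$ and $|Z(R)| = m$, then
\[
\Gamma_R = K_{|S_1| - m} \cup K_{|S_2| - m} \cup \cdots \cup K_{|S_n| - m}.
\]
In particular, $|V(\Gamma_R)| = \sum_{i=1}^n (|S_i| - m) = |R| - m$.

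Next I would group the components by size. Let $m_1 < m_2 < \cdots < m_r$ be the distinct values among $|S_i| - m$, and let $l_j$ be the number of indices $i$ with $|S_i| - m = m_j$. Then
\[
\Gamma_R = l_1K_{m_1} \cup l_2K_{m_2} \cup \cdots \cup l_rK_{m_r}, \qquad 1 \leq m_1 < \cdots < m_r.
\]
Each $m_j \geq 1$, since a centralizer of a non-central element $s$ properly contains $Z(R)$ (it contains $s$). Hence the hypotheses of Theorem \ref{Conj-3-NFDS-2021} hold, and that theorem gives $\Ecn(K_{|V(\Gamma_R)|}) > \Ecn(\Gamma_R)$ directly.

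The only point needing care is the grouping step, namely that Theorem \ref{Conj-3-NFDS-2021} asks for strictly increasing $m_j$. This is harmless: reindexing equal-size components into the multiplicities $l_j$ changes neither the graph nor the vertex count. So I expect no genuine obstacle; the corollary is essentially a direct specialization of the theorem. Since $\Ecn(K_{|V(\Gamma_R)|}) > \Ecn(\Gamma_R)$ is exactly the statement that $\Gamma_R$ is not CN-hyperenergetic, this also settles Conjecture \ref{Conj-1.2}.
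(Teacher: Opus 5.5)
Your reduction is the same as the paper's: its proof simply notes that $\Gamma_R$ is a disjoint union of cliques and invokes Theorem \ref{Conj-3-NFDS-2021}. The weak point is the step you treat as automatic, ``the hypotheses of Theorem \ref{Conj-3-NFDS-2021} hold, so the strict inequality follows.'' That theorem is false as stated in degenerate cases. For $\Gamma = K_n$ (i.e.\ $r = 1$, $l_1 = 1$) one has $\Ecn(K_{|V(\Gamma)|}) = \Ecn(\Gamma)$, and for $\Gamma = 2K_1$ both sides are $0$. Its proof in the case $m_1 = 1$ also tacitly needs $r \ge 2$: for $\Gamma = 3K_1$ the first displayed inequality reads $2 > 2$. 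This case genuinely arises, since $\Gamma_R = 3K_1$ for a non-commutative ring of order $4$, which is a CC-ring. So the point that needs care is not the regrouping, which is indeed harmless, but excluding these cases. Neither your proposal nor the paper does this.

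The repair is short. Two non-commuting elements of $R$ are non-central and lie in different components, so $\Gamma_R$ has $k \ge 2$ components, of sizes $n_1, \ldots, n_k \ge 1$. Also $N = |V(\Gamma_R)| = ([R:Z(R)] - 1)|Z(R)| \ge 3$, because the additive group $R/Z(R)$ of a non-commutative ring is not cyclic. Put $f(x) = (x-1)(x-2)$, so that $\Ecn(K_N) = 2f(N)$ and $\Ecn(\Gamma_R) = 2\sum_i f(n_i)$. Applying the identity $f(a+b) = f(a) + f(b) + 2(ab - 1)$ while adding the components one at a time gives
\[
f(N) - \sum_{i=1}^k f(n_i) = 2\sum_{j=2}^{k}\bigl((n_1 + \cdots + n_{j-1})n_j - 1\bigr).
\]
Every summand is nonnegative, and the last is positive. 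If $k \ge 3$ then $n_1 + \cdots + n_{k-1} \ge 2$; if $k = 2$ then $n_1 n_2 \ge 2$ because $n_1 + n_2 = N \ge 3$. Hence $\Ecn(K_N) > \Ecn(\Gamma_R)$.
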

\begin{proof}
The result follows from Theorem \ref{Conj-3-NFDS-2021} noting that $\Gamma_R$ is of the form $l_1K_{m_1} \cup l_2K_{m_2} \cup \cdots \cup l_rK_{m_r}$ for some $l_i$ and $m_i$.
\end{proof}
We conclude this paper with the following problems.
\begin{prob}
Characterize all finite non-commutative rings $R$  such that $\Gamma_R$ is MSN-integral.
\end{prob}
\begin{prob}
Give examples of finite non-commutative rings $R$ such that $\Gamma_R$ is MSN-hyperenergetic. Also, characterize all finite non-commutaive rings $R$ (if exist) such that $\Gamma_R$ is MSN-hyperenergetic.
\end{prob}


\end{document}